\newfont{\bbb}{msbm10 scaled\magstep1}
\DeclareSymbolFont{fouriersymbols}{FMS}{futm}{m}{n}
\DeclareSymbolFont{fourierlargesymbols}{FMX}{futm}{m}{n}
\DeclareMathDelimiter{\VERT}{\mathord}{fouriersymbols}{152}{fourierlargesymbols}{147}
\newcommand{\ri}{\mathrm{i}}
\newcommand{\rd}{\mathrm{d}}
\newcommand{\R}{\mathbb{R}}
\newcommand{\N}{\mathbb{N}}
\newcommand{\tr}{\mathrm{Tr}}
\newcommand{\ie}{\textit{i.e.}\;}
\newcommand{\eg}{\textit{e.g.}\;}
\newcommand{\cunit}{~\text{m/s}}
\newcommand{\xunit}{~\text{m}}
\newcommand{\dds}[1]{\frac{\mathrm{d}#1}{\mathrm{d}s}}
\newcommand{\p}{\partial}
\newcommand{\f}[2]{\frac{#1}{#2}}
\newcommand{\Dt}{\Delta}
\newcommand{\dt}{\delta}
\newcommand{\Og}{\Omega}
\newcommand{\veps}{\varepsilon}
\newcommand{\bl}{\begin{law}}
	\newcommand{\el}{\end{law}}
\newcommand{\bthm}{\begin{thm}}
	\newcommand{\ethm}{\end{thm}}
\newcommand{\mcl}{\mathcal}
\newcommand{\dpm}{\displaystyle}
\newcommand{\abs}[1]{\left\lvert#1\right\rvert}
\newcommand{\bpm}{\left(\begin{array}}
	\newcommand{\epm}{\end{array}\right)}
\newcommand{\tu}{\tilde{u}}
\newcommand{\tua}{\tu^\dagger}
\newcommand{\K}{K}
\newcommand{\tK}{\tilde{K}}
\newcommand{\dG}{{\dot{G}}}
\newcommand{\E}{\mathbb{E}}
\newcommand{\Pb}{\mathbb{P}}
\newcommand{\mis}{\mathrm{J}}
\newcommand{\X}{\mathrm{X}}
\newcommand{\tX}{\tilde{\mathrm{X}}}
\newcommand{\Y}{\mathrm{Y}}
\newcommand{\BatG}{\mathfrak{B}}
\newcommand{\BatR}{\mathfrak{R}}
\newcommand{\BatS}{\mathfrak{S}}
\newcommand{\obs}{\mathrm{obs}}
\newcommand{\rec}{\mathrm{R}}
\newcommand{\src}{\mathrm{S}}
\title{Seismic Tomography with Random Batch Gradient Reconstruction}
\begin{document}
	
	\author{
		Yixiao Hu\footnote{Department of Mathematical Sciences, Tsinghua University, Beijing, 100084, China (yx-hu16@mails.tsinghua.edu.cn).}
		\and
		Lihui Chai\footnote{School of Mathematics, Sun Yat-sen University, Guangzhou, 510275, China (chailihui@mail.sysu.edu.cn).} 
		\and 
		Zhongyi Huang\footnote{Department of Mathematical Sciences, Tsinghua University, Beijing, 100084, China (zhongyih@tsinghua.edu.cn).}  
		\and Xu Yang\footnote{Department of Mathematics, University of California, Santa Barbara, CA 93106, USA (xuyang@math.ucsb.edu).}
	}
	
	\begingroup
	
	
	\makeatother
	
	\endgroup
	
	
	\maketitle
	\begin{abstract}
		Seismic tomography solves high-dimensional optimization problems to image subsurface structures of Earth. In this paper, we propose to use random batch methods to construct the gradient used for iterations in seismic tomography. Specifically, we use the frozen Gaussian approximation to compute seismic wave propagation, and then construct stochastic gradients by random batch methods. The method inherits the spirit of stochastic gradient descent methods for solving high-dimensional optimization problems. The proposed idea is general in the sense that it does not rely on the usage of the frozen Gaussian approximation, and one can replace it with any other efficient wave propagation solvers, {\it e.g.}, Gaussian beam methods and spectral element methods. We prove the convergence of the random batch method in the mean-square sense, and show the numerical performance of the proposed method by two-dimensional and three-dimensional examples of wave-equation-based travel-time inversion and full-waveform inversion, respectively. As a byproduct, we also prove the convergence of the accelerated full-waveform inversion using dynamic mini-batches and spectral element methods.

	\end{abstract}
	
	
	\begin{keywords}
		Seismic tomography, random batch method, frozen Gaussian approximation, high-dimensional optimization, inverse problems.
	\end{keywords}
	

	\newtheorem{thm}{Theorem}[section]
	\newtheorem{lem}{Lemma}[section]
	\newtheorem{coro}{Corollary}[section]
	\newtheorem{prop}{Proposition}[section]
	\newtheorem{define}{Definition}[section]
	\newtheorem{remark}{Remark}[section]
	\newtheorem{assumption}{Assumption}

	\section{Introduction}
	
	Seismic tomography can provide crucial information by computing images for the subsurface structures of Earth at different scales, and the understanding of tectonics, volcanism, and geodynamics \cite{Aki1976, Romanowicz1991, Rawlinson2010, Zhao2012a}. Wave-equation-based seismic tomography solves the nonlinear high-dimensional optimization problem iteratively for velocity models by computing seismograms and sensitivity kernels in complex models \cite{Tromp2005, Liu2008, Liu2012, Tong2014a}. Successful applications include imaging the velocity models of the southern California crust \cite{Tape2009, Tape2010}, the European upper mantle \cite{Zhu2012}, {the North Atlantic region \cite{Rickers2013}, and the Japan islands \cite{Simute2016}}. The performance of seismic tomography is restricted by how accurate and efficient one can compute synthetic seismograms and sensitivity kernels \cite{Tromp2005}, which are used to construct descent directions of velocity models for iterations. 
	The computation of the synthetic seismograms could be extremely expensive for large-scale and high-frequency 3-D simulations, and
	the velocity models live in a high-dimensional space, making it challenging to compute its descent directions and find the global minima, known as the curse of dimensionality. There have been recent research works aiming to overcome the challenge, \eg, using a randomized optimizer to search the global minima \cite{LiLinYa:14}, and using the Wasserstein metric to improve the convexity \cite{qiu2017full,YangEng2018}.

	Stochastic gradient descent (SGD), frequently used in training deep neural networks, has been proved to be efficient in solving high-dimensional optimization problems. Note that a common gradient descent needs to accurately compute gradients in high dimensions at each iteration, which can be computationally prohibitive and stuck in bad local minima, while SGD can perform better in overcoming these issues illustrated, for example, by the Adam method \cite{kingma2014adam}. Motivated by the success of SGD, we propose to solve the high-dimensional optimization problem in seismic tomography by constructing descent directions of velocity models using the random batch method (RBM), recently proposed for computing dynamics of interacting particles \cite{jin2020random}. 
	
	To use RBM in seismic tomography, possible choices for computing synthetic seismograms are numerical methods of particle type, {\it e.g.}, generalized ray theory \cite{He:68,ViHe:88}, Kirchhoff migration \cite{Gr:86,KeBe:88}, Gaussian beam
	migration \cite{Hi:90,Hi:01,SEG-2003-11141117,Gr:05,GrBe:09,PoSePoVe:10}, 
	Gaussian beam method \cite{Qian2010,Bao2013}, 
	and frozen Gaussian approximation (FGA) \cite{LuYa:11,chai2017frozen, chai2018tomo,hate2018fga,chai2021fgaconv}. One may also use direct numerical methods ({\it e.g.}, the spectral element method) to compute the synthetic sesmograms if the random batch is applied to source-receiver pairs \cite{van2020accelerated}. Here for the sake of convenience, we use FGA to compute wave equations.
	FGA was originally used in quantum chemistry for Schr\"odinger equation \cite{He:81,HeKl:84}, with systematic justifications in \cite{Ka:94,Ka:06,SwRo:09}. Then the formula was generalized to linear hyperbolic systems \cite{LuYa:11,LuYa:CPAM}, with applications in seismic tomography \cite{YaLuFo:13,LiLinYa:14,chai2017frozen,chai2018tomo,hate2018fga}.
	FGA does not need to solve ray paths by shooting to reach the receivers, and can provide accurate solutions in the presence of caustics and multipathing, with no requirement on tuning beam width parameters to achieve a good resolution  \cite{CePoPs:82,Hi:90,FoTa:09,Qian2010,LuYa:11}.  
	
	In this paper, we focus on seismic tomography based on acoustic wave propagation (P wave). We shall study both the wave-equation-based travel-time inversion (TTI) and full-waveform inversion (FWI). Specifically, we compute the wave equations by FGA and construct the sensitivity kernel by RBM, yielding the stochastic decent directions for the velocity model. Then the convergent iterations in TTI and FWI will produce the velocity model expected in seismic tomography. We analyze the convergence of the proposed method in the mean-square sense, and show the accuracy by a two-dimensional (2D) Gaussian perturbation model, a 2D gradually changing background model, a 2D three-layered model, and a three-dimensional (3D) Gaussian perturbation model. 
	
	The rest of the paper is organized as follows. In Section~\ref{sec:model}, we introduce the model setup and the formulation of seismic tomography. In Section~\ref{sec:randombatch}, we systematically describe the construction of stochastic gradients by frozen Gaussian approximation and random batch method, and then prove the convergence to the deterministic gradient descent in the mean-square sense. 
	In addition, we generalize the idea and provide convergence analysis for the accelerated full-waveform inversion method \cite{van2020accelerated} which performs the random batch on source-receiver pairs. We present the numerical performance by several examples in Section~\ref{sec:example}, and make conclusive remarks in Section~\ref{sec:conclusion}.

	\section{Seismic tomography}\label{sec:model}
	In this section, we introduce the formulation of seismic tomography where the propagation of seismic waves is modeled by acoustic wave equations (P-wave),
	\begin{align}\label{eqn:wave3d}
	\begin{split}
		&\rho(x)\,\p_t^2u-\Dt_{x} u=s(t,x), ~\quad x\in\R^3, ~t>0,
		\\
		&u(t=0,x) = 0, \quad \p_t u(t=0,x) = 0.
	\end{split}
	\end{align}
	Here $\rho(x)$ is the reference media density at location $x\in\R^3$, from which one can get the P-wave velocity $c=1/\sqrt\rho$, $\Dt_x$ is the Laplace operator in $x$, and $s(t,x)$ is the source term. When the earthquake is modeled by a point source, one can choose $s(t,x)=f(t)\dt_d(x-x_s)$ with $f(t)$ as the source time function at ${x}_s$ with compact support on $[0,\infty)$, and $\dt_d$ as the Dirac delta function. Remark that the formulation here can be easily generalized to elastic wave propagation as in, {\it e.g.}, \cite{Tromp2005, hate2018fga}, and we focus on seismic tomography using the propagation of P-wave for the sake of simplicity.

	The seismic tomography aims to solve an inverse problem which minimizes the misfit functional
	\begin{equation}\label{eq:misfit}
		\mis(\rho) := \f12\int_0^T\rd t\int_\Og\rd x\, w(x)\left(\mcl{A}[u_\obs(t,x)] - \mcl{A}[u(t,x;\rho)]\right)^2, 
	\end{equation}
	where 
	$[0,\,T]$ is a fixed time window, $\Omega$ is the region of interest, 
	$w(x)$ represents the distribution of receivers 
	(\eg, in a finite-receiver setup, $\dpm w(x)=\frac1{N_{\rec}}\sum_{r=1}^{N_{\rec}}\delta_d(x-x_r)$, where $N_{\rec}$ is the number of receivers and $x_r$ is the location of the $r$-th receiver station), 
	$u_\obs(t,x)$ is the observed signal and $u(t,x)$ is the synthetic signal satisfies the forward propagating wave equation \eqref{eqn:wave3d}. We use $\mcl{A}$ to denote an observation operator to extract useful information from the signals, {\it e.g.}, in the  full-waveform inversion (FWI), $\mcl{A}[u]=u$ means all the information containing in a signal $u$ is used; and in the travel-time inversion, $\mcl{A}[u]$ is the time spent by a signal $u$ generated from a earthquake location $x_s$ propagated to a seismic receiver $x_r$.

	In order to solve the minimization problem, one needs to compute the Fr\'echet derivative of the misfit functional $\dt \mis/\dt\rho$. Without loss of generality, we take FWI as an example and compute ({\it c.f.} \cite{chai2018tomo})
	\begin{align*}
		\dt\mis&=-\int_0^T \rd t\int_\Og\rd x\,w(x)\,(u_{\obs}(t,x)-u(t,x))\,\dt u(t,x) 
		\\
		&=\int_0^T\rd t\int_\Og\rd x\int_0^t\rd\tau\int_\Og\rd y\, w(x)\,[u_{\obs}-u](t,x)\, G(t,x;\tau,y)\,\p_t^2{u}(\tau,y)\,\dt\rho(y) 
		\\
		&=\int_\Og\rd x\int_\Og\rd y\int_0^T\rd\tau\int_0^{T-\tau}\rd t\, w(x)\,[u_{\obs}-u](T-t,x)\, G(T-t,x;\tau,y)\,\p_t^2{u}(\tau,y)\,\dt\rho(y) 
		\\
		&=\int_\Og\rd y\int_0^T\rd\tau\int_0^{T-\tau}\rd t\int_\Og\rd x\, w(x)\,[u_{\obs}-u](T-t,x)\, G(T-\tau,y;t,x)\,\p_t^2{u}(\tau,y)\,\dt\rho(y) 
		\\
		&=\int\rd y\int_0^T\rd\tau \,\dt{\rho}(y)\,u^\dagger(T-\tau,y)\,\p_t^2{u}(\tau,y)
		\,,
	\end{align*}
	where the Green's function $G=G(t,x;\tau,y)$ solves
	\begin{equation*}
		\rho(x)\,\p_t^2G-\Dt_{x}G=\dt_d(t-\tau,x-y),
	\end{equation*}
	and $u^\dagger$ solves the adjoint wave equation 
	\eqref{eqn:adjoint} 
	\begin{align}\label{eqn:adjoint}
	\begin{split}
		&\rho(x)\,\p_t^2u^\dagger-\Dt_{x} u^\dagger=s^\dagger(t,x), ~\quad x\in\R^3, ~t>0,
		\\
		&u^\dagger(t=0,x) = 0, \quad \p_t u^\dagger(t=0,x) = 0,
	\end{split}
	\end{align}
	with the adjoint source function 
	\begin{equation}
		s^\dagger(t,x)=w(x)\,[u_\obs-u](T-t,x).
	\end{equation}
	Define the sensitivity kernel 
	\begin{equation}\label{eq:kernel}
		\begin{aligned}
			\K(x;\rho) :=& \int_0^T\rd t \, \rho(x)\,u^\dagger(T-t,x)\,\p_t^2{u}(t,x)
			= \int_0^T\rd t \, \rho(x)\,\p_tu^\dagger(T-t,x)\,\p_t{u}(t,x),
		\end{aligned}
	\end{equation}
	then one gets
	\begin{equation}\label{eqn:var_derivative}
		\dt\mis = \int_\Og\rd x \, K(x;\rho)\,\dt\log\rho(x).
	\end{equation}
	\begin{remark}
		The above computation can also performed to wave-equation-based travel-time inversion, yielding a similar formulations except that the adjoint source function becomes ({\it c.f.} \cite{chai2018tomo})
		\begin{equation*}
			s^\dagger(t,x)=w(x)g(t)\p_tu(t,x),
		\end{equation*}
		where $g(t)$ is a window function supported in $[0,\, T]$. 
	\end{remark}
	
	{
		After computing the sensitivity kernel \eqref{eq:kernel} and the Fr\'echet derivative of the misfit functional \eqref{eqn:var_derivative}, one can apply optimization methods to find the minimizer of \eqref{eq:misfit}, producing the desired velocity model by the relation $c=1/\sqrt\rho$. Classical methods include but not limited to gradient descent method, conjugate gradient method, Newtonian and qusi-Newtonian methods.  
		In this paper, we choose the gradient descent method for its simplicity, bringing convenience for derivation and proofs. We remark that the idea of reconstructing the gradient using random batch method can be used essentially the same way in other kinds of gradient-based optimization methods, at least for the purpose of numerical computing.
	}
	
	
	The gradient descent method can be formulated by
	\begin{equation}
		\frac{\rd \X}{\rd s} = -\nabla_\X\mis , 
	\end{equation}
	where $\X:=\log\rho$ and $\nabla_\X\mis := \K(x;\rho)$. 
	
	Given $\rho>\rho_0>0$, the map $\rho\rightarrow\X$ is one-to-one. 
	Therefore, we shall use $\X,\,\Y,\,\tX$ to denote density (or velocity) models all through this paper.
	Note that $X$ is a function of spatial variable $x\in\R^3$ and the iteration index $s\in\R^+$. Let's define
	$|\X(s)|^q := \int_\Omega |\X(x,s)|^q \rd x$, and 
	$\|\X(s)\| := \left(\E|\X(s)|^2\right)^{1/2}$. 
	For the rest part of the paper, we shall not write the dependence of $\X$ on the spatial variable $x$ explicitly but use $\X=\X(s)$ to put more focus on the iteration procedure. We also write $\K(\X)=\K(x;\rho)$.
	
	To make the inverse problem well-posed, a regularization term is added to the misfit functional \eqref{eq:misfit} and the above gradient flow is modified by
	\begin{equation}\label{eq:gradient_flow}
		\frac{\rd \X}{\rd s} = -K(\X) - \nabla_\X V(\X),
	\end{equation}
	where $V$ is a given regularization potential does not rely on solving the wave equations and we  assume that $V$ is strongly convex in $\X$ so that $V(\X)-\f{r}2\X^2$ is convex for some $r>0$, and $\nabla_{\X}V,\,\nabla_{\X}^2V$ have polynomial growth. 
	We remark here that the assumptions on $V$ is only for technical use in order to prove the convergence in the following section. In practice, these assumptions may be removed. It will be seen that, in Section \ref{sec:example}, the numerical examples all simply take $V\equiv0$ and we can still get numerical convergence results.

	\section{Frozen Gaussian approximation and random batch method}\label{sec:randombatch}
	In this section, we systematically introduce the construction of stochastic gradient by frozen Gaussian approximation (FGA) and random batch methods. We first describe how to use FGA to construct the gradient, and then use random batch method to construct the stochastic gradient.

	\subsection{FGA-based gradient construction}
	The sensitivity kernel $\K$ defined in \eqref{eq:kernel} is a cross-correlation of the forward and adjoint wavefields. For the convenience of latter discussion, we write $\K=\K(\X)$ to indicate dependence on the velocity model (noticing that both $u$ and $u^\dagger$ depend on $\X$ since they are synthetic solutions of \eqref{eqn:wave3d} and \eqref{eqn:adjoint}).  
	
	FGA approximates the wavefields by ({\it c.f.} \cite{chai2017frozen})
	\begin{equation}
		\label{eqn:FGA}
			u(t,x;\X)=\f1{N}\sum_{j=1}^N G_j(t,x;\X), 
			\quad{\text{and}}\quad
			u^\dagger(t,x;\X)=\f1{N}\sum_{j=1}^N G^\dagger_j(t,x;\X),
	\end{equation}
	where $G_j$ and $G^\dagger_j$ are Gaussian functions in the form of, \eg,
	\begin{equation*}
	    A\exp\left(\f{\ri}{\veps}P\cdot(x-Q)-\f1{2\veps}\abs{x-Q}^2\right)
	\end{equation*}
	with $A$, $Q$, and $P$ are functions of $(t,q,p)$
	determined by a set of ordinary differential equations(ODEs):
	\begin{equation}\label{eqn:FGAodes}
	\begin{dcases}
    \frac{\rd {Q}}{\rd t} = \partial_{P} H,
    &{Q}(0, {q}, {p}) = {q}, \\
    \frac{\rd {P}}{\rd t} = -\partial_{Q} H,
    &{P}(0, {q}, {p}) = {p},\\
    \frac{\rd A}{\rd t} = A\,\frac{\partial_{P} H \cdot \partial_{Q} H}{H}
    +\frac{A}{2}\,\tr\left(Z^{-1}\frac{\rd Z}{\rd t} \right),
    & A(0, {q}, {p}) = A_0(q,p),
    \end{dcases}
    \end{equation}
    with $H({Q},{P})=\pm c({Q})\abs{P}$, and the shorthand notations $\partial_{z}=\partial_{q}-\ri\partial_{p}$ and $Z=\partial_{z}(Q+\ri{P})$;
	see more details in \cite{chai2017frozen,chai2018tomo,hate2018fga}. 
	In \eqref{eqn:FGA} we assume the same beam number $N$ for both forward and adjoint simulations and $N$ does not change during the iteration procedure \eqref{eq:gradient_flow}. For the convenience of notations and latter discussion, we introduce 
	\begin{equation}
	    \label{eq:FGA_derivative}
	    \dG_j(t,x;\X):=\p_tG_j(t,x;\X),
	    \quad\text{and}\quad
	    \dG_j^\dagger(t,x;\X):=\rho(x)\,\p_tG_j^\dagger(t,x;\X).
	\end{equation}
	Then we approximate the kernel
	\begin{equation}\label{eq:k_full}
		\K(\X) = \sum_{k=1}^{T/\tau}\tau\,u^\dagger_k\,u_k
	\end{equation}
	where $u_k=\p_tu(t_k,x)$ and $u^\dagger_k=\rho(x)\,\p_tu^\dagger(T-t_k,x)$, $t_k=k\tau$, $k=1,2,\cdots,T/\tau$.
	The velocity model follows
	\begin{equation}\label{eq:gradient_flow_full}
		\frac{\rd\X}{\rd s} =  - \nabla_\X V(\X)  -\K(\X) ,
	\end{equation}
	at each iteration step $s\in[s_{m-1},s_m)$.

	\subsection{Stochastic gradient by random batch method}\label{sec:GRwRB}
	
	At each iteration step $s_m=mh$ and each time $t_k=k\tau$, 
	we randomly choose index sets $\BatG_{m,k},\,\BatG^\dagger_{m,k}\subset\{1,2,\cdots,N\}$,
	such that: 
	first, the number of indices $|\BatG_{m,k}|=|\BatG^\dagger_{m,k}|=p\ll N$;
	and second, all $\BatG_{m,k},\,\BatG^\dagger_{m,k}$ for $m=0,1,2,\cdots$ and $k=1,2,\cdots,T/\tau$ are independently identical distributed, 
	and the probability of an index belongs to a index set $\Pb(j\in\BatG_{m,k})=\Pb(j\in\BatG^\dagger_{m,k})=p/N$ for $1\leq j\leq N$.
	We call such $\BatG_{m,k}$ and $\BatG^\dagger_{m,k}$ as {\it random batches}.
	
	We first approximate the wavefields by random batch method
	\begin{align}
	\label{eqn:FGA_rb}
			\tu(t,x;\X)=\f1{p}\sum_{j\in\BatG_{m,k}} G_j(t,x;\X), 
			\quad\text{and}\quad
			\tua(t,x;\X)=\f1{p}\sum_{j\in\BatG^\dagger_{m,k}} G^\dagger_j(t,x;\X).
	\end{align}
	Then we define the stochastic gradient as
	\begin{equation}\label{eq:k_rb}
		\tK(\X) = \sum_{k=1}^{T/\tau}\tau\,\tua_k\,{\tu_k},
	\end{equation}
	where $\tu_k=\p_t\tu(t_k,x)$ and $\tua_k=\rho(x)\,\p_t\tua(T-t_k,x)$. 
	
	We call $\tK(\X)$ as the random batch kernel, and update the velocity model by 
	\begin{equation}\label{eq:gradient_flow_rb}
		\frac{\rd\X}{\rd s} =  - \nabla_\X V(\X) - \tK(\X) ,
	\end{equation}
	for each iteration step $s\in[s_{m-1},s_m)$.
	
	\begin{remark}\label{rem:idea}
		The main idea here is to use random batch summation to construct randomized wavefields and the corresponding sensitivity kernels. 
		It does not rely on the usage of FGA and one can replace it by any other efficient wave propagation solvers, {\it e.g.}, Gaussian beam method and specitral element method. 
	\end{remark}

	\begin{remark}
        As discussed in \cite{chai2017frozen}, the ODEs \eqref{eqn:FGAodes} can be embarrassingly parallelly computed, but the parallelization on computing the summation \eqref{eqn:FGA} is less efficient and technically involved. Therefore, the random batch summation \eqref{eqn:FGA_rb} can significantly reduce the workload of reconstructing wavefields and further improve the efficiency of the FGA computation. 
	\end{remark}

	\begin{remark}\label{rem:iniFGA}
	    To get a well-resolved wavefield, the number of beams $N$ is typically at the order of $\veps^{-d/2}$ where $\veps$ is proportional to the wave-length and $d=2,3$ is the dimensionality of the space, thus $N$ is usually a large number when doing a high-frequency simulation where $\veps\ll1$. Recently, a random sampling method \cite{Xie2021} was developed for the Schr\"odinger equation, which can reduce the number of beams significantly by choosing beams via a preliminary distribution. This sampling idea is restricted to Gaussian or WKB initial data for which one can determine whether a beam is more ``important" or less. 
	    However, for wave equations with Dirac delta sources, the beams are almost uniformly distributed, and thus it's not clear yet how to find a good sampling strategy. 
	\end{remark}

	As in Algorithm \ref{sgdrb}, we present a brief pseudo-code for implementation of the procedure of the proposed stochastic gradient descent by random batch method. We remark that if in line \ref{gen_rb} and \ref{gen_rba} we set $\BatG_{m,k}$ and $\BatG^\dagger_{m,k}$ to be the whole index set $\{1,2,\cdots,N\}$, the algorithm recover the classical gradient descent method.
	\begin{algorithm}
		\caption{Stochastic gradient descent by random batch method}
		\label{sgdrb}
		\begin{algorithmic}[1] 
        \Procedure{Main loop}{} 
 			\State $\X=\X_0$, $m=0$
			\State Compute $\mis(\X)$
			\While{{$m<M^*$}} \Comment{ $M^*$ is a given number indicate the maximum iteration steps }
			\State $\tK$~$\leftarrow$~\Call{Random Batch Kernel}{} \Comment{Compute the gradient by random batch method}
			\State $\X$~$\leftarrow$~$\X-\alpha(\tK+\nabla_\X V)$, $m$~$\leftarrow$~$m+1$
			\State Compute $\mis(\X)$
			\State \textbf{if {$(\mis(\X)<\mis^*)$}} \textbf{exit} \Comment{If the misfit smaller than a given threshold $J^*$, exit iteration loop}
			\EndWhile
        \EndProcedure
			\Procedure{Random Batch Kernel} {$m$} 
			\State Given discrete source and receiver locations $x_s$ and $x_r$
			\State Initialize $G_j(t,x;\X)$ at $t=0$ for all $j=1,\cdots,N$ 
			\For {$k=1:T/\tau$} \Comment{TIME EVOLUTION FOR FORWARD SIMULATION}
			\State $G_j(t,x;\X)$~$\leftarrow$~$G_j(t+\tau)$ for all $j=1,\cdots,N$ \Comment{Evolve FGA ODEs for one time step} 
			\State Generate independent random batch $\BatG_{m,k}$ \label{gen_rb}
			\State 
			Compute $\tu(t,x;\X)$ by equation \eqref{eqn:FGA_rb}; 
			Compute $u(t,x_r;\X)$ by equation \eqref{eqn:FGA}
			\EndFor
			\State Initialize $G_j^\dagger(t,x;\X)$ at $t=0$ for all $j=1,\cdots,N$
			\For {$k=1:T/\tau$} \Comment{TIME EVOLUTION FOR ADJOINT SIMULATION}
			\State $G_j^\dagger(t,x;\X)$~$\leftarrow$~$G_j^\dagger(t+\tau)$ for all $j=1,\cdots,N$ \Comment{Evolve FGA ODEs for one time step} 
			\State Generate independent random batch $\BatG^\dagger_{m,k}$ \label{gen_rba}
			\State Compute $\tua(t,x;\X)$ by equation  \eqref{eqn:FGA_rb}
			\EndFor
			\State Compute $\tK$ by equation \eqref{eq:k_rb}
			\EndProcedure ~ \Return{$\tK$ and $u(\cdot,x_r;\X)$}
		\end{algorithmic}
		
	\end{algorithm}
	
	\subsection{Preliminary results}\label{sec:preliminary} In this subsection, we prove a few lemmas, as a preparation to prove the convergence theorem in the next subsection. 

	First, let's state the Lipschitz continuity properties of $\K$ and $\tK$.
	\begin{prop}
	The kernels defined in \eqref{eq:k_full} and \eqref{eq:k_rb} are Lipschitz continuous in $\X$.
	\end{prop}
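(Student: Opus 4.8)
The plan is to exploit the explicit structure of both kernels as bounded bilinear expressions in the Gaussian beams, and thereby reduce the claim to Lipschitz dependence of each individual beam on the velocity model. Inserting the FGA representation \eqref{eqn:FGA} into \eqref{eq:k_full} gives
\[
\K(\X) = \sum_{k=1}^{T/\tau} \f{\tau}{N^2} \sum_{i,j=1}^N \dG^\dagger_i(T-t_k,x;\X)\,\dG_j(t_k,x;\X),
\]
and $\tK(\X)$ in \eqref{eq:k_rb} has the identical form with the full sums replaced by the batch averages over $\Bat_{m,k},\Bat^\dagger_{m,k}$. Since each kernel is thus a \emph{finite} sum of products of beam derivatives, and a product of bounded Lipschitz functions is again Lipschitz, it suffices to show that every $\dG_j$ and $\dG_j^\dagger$ is uniformly bounded and Lipschitz continuous in $\X$ on $[0,T]$. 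For $\tK$ the resulting Lipschitz constant is identical for every realization of the random batches, because the beams themselves do not depend on the choice of $\Bat_{m,k}$; only which beams are summed changes.

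Each beam $G_j = A\exp\!\left(\f{\ri}{\veps}P\cdot(x-Q)-\f1{2\veps}\abs{x-Q}^2\right)$ is a smooth, bounded function of the FGA quantities $(A,Q,P)$, which solve the ODE system \eqref{eqn:FGAodes}. The dependence on $\X$ enters only through the Hamiltonian $H=\pm c(Q)\abs{P}$ with $c=\e^{-\X/2}$, together with the amplitude equation, which involves $\p_Q H$ and the matrix $Z=\p_z(Q+\ri P)$; these in turn require finitely many spatial derivatives of $c$, hence of $\X$. The key step is therefore to establish Lipschitz dependence of the flow $(A,Q,P)(t;\X)$ on $\X$. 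Writing the system abstractly as $\dot y = F(y;\X)$ with $y=(Q,P,A)$, one verifies that $F$ is Lipschitz in $y$ and in $\X$, provided $c$ and enough of its spatial derivatives are uniformly bounded and $Z$ stays uniformly invertible along the flow. Grönwall's inequality then propagates these estimates to yield, for each fixed $(q,p)$,
\[
\sup_{t\in[0,T]} \abs{(A,Q,P)(t;\X) - (A,Q,P)(t;\Y)} \le C\,\abs{\X - \Y},
\]
with $C$ independent of $j$ and of the time window.

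Combining these pieces through the chain rule and the product-of-Lipschitz-functions argument then delivers Lipschitz continuity of both $\K$ and $\tK$. The main obstacle is the flow estimate above: the amplitude ODE contains the term $\tr\!\left(Z^{-1}\f{\rd Z}{\rd t}\right)$, where $Z$ encodes the first-order variation of the bicharacteristic flow and thus depends on second spatial derivatives of $c$. Controlling this term — both its uniform boundedness and its Lipschitz modulus in $\X$ — and guaranteeing that $Z$ remains nonsingular over $[0,T]$, is the delicate part of the argument. This forces the estimates to be carried out in a topology on $\X$ that controls sufficiently many spatial derivatives of $c=\e^{-\X/2}$ (equivalently, one restricts to a bounded family of smooth velocity models with $\rho>\rho_0>0$), so that the right-hand side $\abs{\X-\Y}$ is a meaningful majorant; once such uniform $C^k$ control is granted, the remaining Grönwall propagation and composition steps are routine.
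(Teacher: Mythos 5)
Your proposal follows essentially the same route as the paper's proof: reduce both kernels to finite sums of products of the $\dG_j$, $\dG_l^\dagger$, and then invoke smooth (Lipschitz) dependence of the FGA ODE solutions $(Q,P,A)$ on the velocity model $\X$ through the Hamiltonian $H=\pm c(Q)\abs{P}$. Your version is more careful than the paper's one-line argument --- in particular you correctly flag that the $\tr\left(Z^{-1}\,\rd Z/\rd t\right)$ term forces uniform invertibility of $Z$ and a topology on $\X$ controlling enough spatial derivatives of $c$, points the paper leaves implicit --- but the underlying idea is identical.
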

	\begin{proof}
	As it defined in equations \eqref{eqn:FGA}-\eqref{eq:k_full}, the kernel $K$ can be seen as a summation of $\dG_j\dG^\dagger_l$'s. Each $G_j$ or $G^\dagger_l$ is a Gaussian function whose parameters are given by a set of ODEs \eqref{eqn:FGAodes}. By the smooth dependence on the initial condition and parameters for solution of ODEs, one can deduce that $\dG_j$ and $\dG^\dagger_l$'s are smooth in $\X$ and thus $\K$ is Lipschitz continuous in $\X$. Similarly $\tK$ is Lipschitz continous in $\X$.
	\end{proof}

	Note that equation \eqref{eq:gradient_flow_rb} can be rewritten as
	\begin{equation}\label{eq:gradient_flow_rb_chi}
		\frac{\rd\X}{\rd s} =  - \nabla_\X V(\X) -{\K}(\X(s)) - \chi_m(\X(s)),
	\end{equation}
	where
	\begin{equation}\label{eq:chi}
		\chi_m(\X):={\tK}(\X)-\K(\X).
	\end{equation}
	Thus to analyze the convergence of the random batch method, the key is a precise estimate on $\chi_m$, for which we have the following lemma.
	\begin{lem}\label{lemma:var}
		Let $\Y$ be a velocity model, fixed and determined, and $\K(\Y)$ and $\tK(\Y)$ defined as in \eqref{eq:k_full} and \eqref{eq:k_rb}, respectively. Then
		\begin{align}
			\label{eqn:exp}
			\E[\chi_m(\Y)] & = 0, 
			\\
			\label{eqn:var}
			\E[\chi_m^2(\Y)] & = \left(\f1{p}-\f1{N}\right) \tau \Lambda,
		\end{align}
		where
		\begin{equation}
			\Lambda 
			= \f\tau{N-1} \sum_{k=1}^{T/\tau} \sum_{j=1}^N \left[
			\left(\dG_j\,-\,\f1{N}\sum_{l=1}^N\dG_l\right)^2 \E(\tua_k)^2 
			+ u_k^2  \left(\dG^\dagger_j\,-\,\f1{N}\sum_{l=1}^N\dG^\dagger_l\right)^2 
			\right].
		\end{equation}
	\end{lem}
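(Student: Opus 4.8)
The plan is to treat $\chi_m(\Y)$ purely as a function of the random batches, since $\Y$ (hence every $G_j$, $G_j^\dagger$, and the time derivatives $\dG_j$, $\dG_j^\dagger$) is fixed and deterministic. First I would rewrite the relevant quantities as sample means: from \eqref{eqn:FGA}--\eqref{eqn:FGA_rb} one has $u_k = \f1N\sum_{j=1}^N\dG_j$ and $\tu_k = \f1p\sum_{j\in\Bat_{m,k}}\dG_j$, with the analogous expressions for $u_k^\dagger$ and $\tua_k$ in terms of $\dG_j^\dagger$, where all derivatives are evaluated at the time slice $t_k$ and the $k$-dependence is kept implicit. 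Introducing the inclusion indicators $\eta_j = \mathbf{1}_{\{j\in\Bat_{m,k}\}}$ of a uniform size-$p$ subset of $\{1,\dots,N\}$ gives $\E[\eta_j]=p/N$, so $\E[\tu_k]=\f1p\cdot\f pN\sum_{j=1}^N\dG_j = u_k$, and likewise $\E[\tua_k]=u_k^\dagger$. Because the batches $\Bat_{m,k}$ and $\Bat_{m,k}^\dagger$ are drawn independently, $\E[\tua_k\tu_k]=\E[\tua_k]\E[\tu_k]=u_k^\dagger u_k$; summing \eqref{eq:k_rb} over $k$ against \eqref{eq:k_full} then yields $\E[\tK(\Y)]=\K(\Y)$, which is exactly \eqref{eqn:exp}.

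For \eqref{eqn:var}, since $\chi_m$ has mean zero I would compute $\E[\chi_m^2(\Y)] = \mathrm{Var}(\tK(\Y))$. The batches are chosen independently across the time index $k$, so the summands $\tua_k\tu_k$ in \eqref{eq:k_rb} are mutually independent and the variance splits as $\mathrm{Var}(\tK)=\sum_{k=1}^{T/\tau}\tau^2\,\mathrm{Var}(\tua_k\tu_k)$. Within a single $k$, using once more the independence of $\Bat_{m,k}$ and $\Bat_{m,k}^\dagger$ together with $\E[\tua_k\tu_k]=u_k^\dagger u_k$, I would expand $\mathrm{Var}(\tua_k\tu_k)=\E[(\tua_k)^2]\,\E[(\tu_k)^2]-(u_k^\dagger u_k)^2$, reducing everything to the second moments of the two sample means.

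The main computational step is the variance of a sampling-without-replacement mean. Using the second moments of the indicators, $\E[\eta_j^2]=p/N$ and $\E[\eta_i\eta_j]=\f{p(p-1)}{N(N-1)}$ for $i\neq j$, I would form $\mathrm{Cov}(\eta_i,\eta_j)$ and sum, the point being that the resulting finite-population correction produces a denominator $N-1$; the outcome is
\begin{equation*}
\mathrm{Var}(\tu_k)=\left(\f1p-\f1N\right)\f1{N-1}\sum_{j=1}^N\left(\dG_j-\f1N\sum_{l=1}^N\dG_l\right)^2,
\end{equation*}
and similarly for $\mathrm{Var}(\tua_k)$ with $\dG_j^\dagger$. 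This is the step I expect to be the most delicate, since the off-diagonal covariance of the indicators is negative and must be combined carefully to convert $\sum_j\dG_j^2-\f1N(\sum_j\dG_j)^2$ into the centered sum of squares with the correct $1/(N-1)$ weight.

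Finally I would assemble the pieces. Writing $\E[(\tu_k)^2]=\mathrm{Var}(\tu_k)+u_k^2$ and $\E[(\tua_k)^2]=\mathrm{Var}(\tua_k)+(u_k^\dagger)^2$ and substituting into $\mathrm{Var}(\tua_k\tu_k)=\E[(\tua_k)^2]\,\E[(\tu_k)^2]-(u_k^\dagger u_k)^2$, the term $(u_k^\dagger)^2\,\mathrm{Var}(\tu_k)$ recombines with $\mathrm{Var}(\tua_k)\,\mathrm{Var}(\tu_k)$ to give $\E[(\tua_k)^2]\,\mathrm{Var}(\tu_k)$, so that
\begin{equation*}
\mathrm{Var}(\tua_k\tu_k)=\E[(\tua_k)^2]\,\mathrm{Var}(\tu_k)+u_k^2\,\mathrm{Var}(\tua_k).
\end{equation*}
This is precisely why the factor $\E(\tua_k)^2$, rather than $(u_k^\dagger)^2$, appears in the first bracketed term of $\Lambda$. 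Multiplying by $\tau^2$, summing over $k$, and inserting the explicit variances from the previous step pulls out the common factor $\left(\f1p-\f1N\right)$ and reproduces $\left(\f1p-\f1N\right)\tau\Lambda$, which is \eqref{eqn:var}.
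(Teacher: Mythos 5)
Your proposal is correct and follows essentially the same route as the paper's proof: independence across time steps to split the variance into a sum over $k$, independence of the forward and adjoint batches to factor $\E[(\tua_k\tu_k)^2]=\E[(\tua_k)^2]\,\E[(\tu_k)^2]$, the sampling-without-replacement second-moment computation (your indicator covariances are the paper's inclusion probabilities $\Pb(j\in\Bat_{m,k})=p/N$ and $\Pb(j,l\in\Bat_{m,k})=\tfrac{p(p-1)}{N(N-1)}$ in different clothing), and the same asymmetric split producing $\E(\tua_k)^2$ in one bracket and $u_k^2$ in the other. You merely spell out the mean-zero claim \eqref{eqn:exp} in more detail than the paper, which dismisses it as straightforward.
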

	\begin{proof}
		The expectation \eqref{eqn:exp} is straightforward, and we only prove the variance equality \eqref{eqn:var}.
		Noticing that $\tu_k$, $\tua_j$ for $k,j=1,\cdots,\,T/\tau$ are independent, then
		\begin{align}\label{eqn:var_1st}
		\begin{split}
			\E\chi_m^2 = \tau^2\sum_{k=1}^{T/\tau}\E\left[(u_ku^\dagger_k-\tu_k\tua_k)^2\right]
			&= \tau^2\sum_{k=1}^{T/\tau}\left( 
			\E (\tu_k)^2\,\E (\tua_k)^2 - u_k^2(u^\dagger_k)^2
			\right)
			\\
			&=\tau^2\sum_{k=1}^{T/\tau}\left( 
			\left(\E(\tu_k)^2 - u_k^2\right)\E(\tua_k)^2 
			+ u_k^2\left(\E(\tua_k)^2 - (u^\dagger_k)^2\right) 
			\right),
		\end{split}
		\end{align}
		where $u_k,\,\tu_k,\,u_k^\dagger$ and $\tua_k$ takes the form of
		\begin{equation*}
			u_k  = \f1N\sum_{j=1}^N\dG_j , \quad
			\tu_k = \f1p\sum_{j\in\BatG_{m,k}}\dG_j ,
			\quad
			u_k^\dagger  = \f1N\sum_{j=1}^N\dG_j^\dagger , \quad
			\tua_k  = \f1p\sum_{j\in\BatG^\dagger_{m,k}}\dG^\dagger_j .
		\end{equation*}
		Then one can compute
		\begin{align*}
			\E(\tu_k)^2 & = \f1{p^2}\sum_{j=1}^N \dG_j^2 \, \Pb(j\in\BatG_{m,k}) 
			\,+\,  \f1{p^2}\sum_{j,l:j\neq l} \dG_j\,\dG_l \, \Pb \left( j\in\BatG_{m,k} \text{ and } l\in\BatG_{m,k} \right)  
			\\
			& = \f1{pN}\sum_{j=1}^N \dG_j^2 
			\,+\,  \f{p-1}{pN(N-1)}\sum_{j,l:j\neq l} \dG_j\,\dG_l ,
		\end{align*}	
		and thus
		\begin{align*}
			\E(\tu_k)^2 - u_k^2 
			& = \left(\f1{pN}-\f1{N^2}\right)\sum_{j=1}^N \dG_j^2 
			\,+\,  \left(\f{p-1}{pN(N-1)}-\f1{N^2}\right)\sum_{j,l:j\neq l} \dG_j\,\dG_l 
			\\
			& =  \left(\f1{p}-\f1{N}\right)\left(\f1N\sum_{j=1}^N \dG_j^2 
			\,-\,  \f1{N(N-1)}\sum_{j,l:j\neq l} \dG_j\,\dG_l\right)
			\\
			& = \left(\f1{p}-\f1{N}\right)\f1{N-1}\sum_{j=1}^N \left(\dG_j \,-\,\f1{N}\sum_{l=1}^N \dG_l\right)^2.
		\end{align*}
		The $\E(\tua_k)^2 - (u^\dagger_k)^2$ can be compute in a analog way, and then we can obtain \eqref{eqn:var}.
	\end{proof}
	
	Let $\X(s)$ be solution to \eqref{eq:gradient_flow_full} and $\tX(s)$ be solution to \eqref{eq:gradient_flow_rb}. Define $Z(s):=\tX(s)-\X(s)$, and let $\mcl{F}_{m-1}$ be $\sigma$-algebra generated by the random batch construction for $s\leq s_{m-1}$. 
	The following lemmas are devoted to the stability and truncation error analysis of the random batch method.
	\begin{lem} One can have the following estimate for $\X$ and $\tX$
		\begin{equation}
			\sup_{t>0}\left(|\X|^q+\E|\tX|^q\right) \leq C_q.
		\end{equation}
	\end{lem}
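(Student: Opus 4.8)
The plan is to establish this uniform-in-iteration moment bound (the supremum should be read over the iteration variable $s$, not $t$) by an energy/dissipation argument: the regularization makes the flow contractive at large amplitude, while the kernel forcing is uniformly bounded, so neither $|\X|^q$ nor $\E|\tX|^q$ can escape a fixed ball. I would treat the deterministic trajectory $\X$ first and then transfer the estimate to $\tX$ pathwise, combining the two to get the stated sum.

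First I would show that the kernels are \emph{uniformly bounded}, not merely Lipschitz: there is a constant $C_K$ with $|\K(\X)(x)|\le C_K$ and $|\tK(\X)(x)|\le C_K$ for all $x\in\Og$, all models $\X$, and every choice of random batch. This strengthens the Lipschitz statement of the Proposition and rests on the same reasoning: each $G_j,G_j^\dagger$ (hence each $\dG_j,\dG_j^\dagger$) is a Gaussian wavepacket whose amplitude $A$ solves the FGA ODE \eqref{eqn:FGAodes} over the \emph{fixed} finite window $[0,T]$. Because $\rho>\rho_0>0$, the coefficients $H$, $\partial_P H\cdot\partial_Q H/H$ and $\tr(Z^{-1}\rd Z/\rd t)$ stay bounded, so $A$ is bounded over $[0,T]$ uniformly in the model; each Gaussian is bounded by its amplitude, so the normalized sums \eqref{eq:k_full} and \eqref{eq:k_rb} are bounded independently of $N$, $p$, and the batch.

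Next I would run the energy estimate. Differentiating $|\X(s)|^q=\int_\Og|\X|^q\,\rd x$ along \eqref{eq:gradient_flow_full} gives
\begin{equation*}
\frac{\rd}{\rd s}|\X(s)|^q = q\int_\Og |\X|^{q-2}\X\bigl(-\nabla_\X V(\X)-\K(\X)\bigr)\,\rd x .
\end{equation*}
Strong convexity of $V$ (so $V-\tfrac r2\X^2$ convex) yields the dissipativity $\X\,\nabla_\X V(\X)\ge r\X^2-c_0$, contributing a term $\le -qr|\X|^q+qc_0\int_\Og|\X|^{q-2}\,\rd x$, while the kernel term is controlled by $C_K\int_\Og|\X|^{q-1}\,\rd x$. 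Since $|\Og|<\infty$, Young's inequality interpolates $\int|\X|^{q-1}$ and $\int|\X|^{q-2}$ between $\int|\X|^q$ and a constant, so that
\begin{equation*}
\frac{\rd}{\rd s}|\X(s)|^q \le -c_1|\X(s)|^q + c_2 ,
\end{equation*}
and Grönwall gives $\sup_{s>0}|\X(s)|^q\le\max\{|\X(0)|^q,\,c_2/c_1\}$. The identical computation applies to $\tX$: on each interval $[s_{m-1},s_m)$ the batch is frozen, so $\tK(\tX(s))$ is a bounded random field obeying the same $C_K$, and the inequality above holds \emph{pathwise} with $m$-independent constants; since $\tX$ is continuous across the breakpoints $s_m$, the pieces glue and Grönwall yields a pathwise bound $\sup_{s>0}|\tX(s)|^q\le C_q$, whence $\sup_{s>0}\E|\tX(s)|^q\le C_q$ after taking expectations. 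Note that Lemma \ref{lemma:var} is not needed here — the uniform kernel bound already closes the estimate — although it is precisely what will drive the convergence theorem.

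I expect the main obstacle to be the uniform-in-model boundedness of the FGA amplitudes: one must verify that the amplitude equation in \eqref{eqn:FGAodes} has no finite-time blow-up over $[0,T]$ for the whole family of velocity models visited by the flow, which relies essentially on $\rho>\rho_0>0$ and on the velocity staying in a nondegenerate regime. A secondary technical point is matching the strong-convexity dissipation, naturally an $L^2$ statement, to the $L^q$ norm $\int_\Og|\X|^q$ appearing in the lemma; this is why I use the pointwise form of the convexity together with interpolation on the bounded domain $\Og$, and why the polynomial-growth hypothesis on $\nabla_\X V$ is needed to keep the cross terms integrable. Global existence of both flows then follows from the a priori bound itself by the usual continuation argument.
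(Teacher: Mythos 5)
Your proposal is correct and follows essentially the same route as the paper: differentiate $|\X|^q$ along the flow, use the strong convexity of $V$ to get the dissipative term $-qr|\X|^q$, bound the kernel contribution by $\|\K\|_\infty|\X|^{q-1}$ and absorb it via Young's inequality, then conclude by a Gr\"onwall-type argument and repeat pathwise for $\tX$ before taking expectations. The only difference is that you make explicit two points the paper leaves implicit — the uniform (not just Lipschitz) boundedness of the FGA-built kernels and the $L^q$-interpolation on the bounded domain — which is a reasonable elaboration rather than a different proof.
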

	\begin{proof}
		\begin{equation*}
			\dds{|\X|^q} 
			= |\X|^{q-2}\X\cdot\dds{\X} 
			= -|\X|^{q-2}\X\cdot \left(\nabla_\X V(\X) + \K(\X)\right)
		\end{equation*}
		\begin{equation*}
			\X\cdot\nabla_\X V(\X)
			= (\X - 0)\cdot(\nabla_\X V(\X) -\nabla_\X V(0) ) + \X\cdot\nabla_\X V(0)
			= (\X - 0)^2:\nabla_\X^2 V(\X^*)  + \X\cdot\nabla_\X V(0) 
		\end{equation*}
		\begin{equation*}
			\dds{|\X|^q} 
			\leq -qr|\X|^q + \|\K\|_\infty |\X|^{q-1} 
			\leq -qr|\X|^q + \|\K\|_\infty \left(\f{q-1}{q}\nu|\X|^{q}+\f1{q\nu^{q-1}}\right).
		\end{equation*}
		Thus $|\X|^q\leq C_q$. Similarly, $\E|\tX|^q\leq C_q$.
	\end{proof}

	\begin{lem}
		For $s\in[s_{m-1},s)$, it holds that
		\begin{equation}\label{eqn:X-inc}
			\left\|\tX(s)-\tX(s_{m-1})\right\| \leq Ch .
		\end{equation}
	\end{lem}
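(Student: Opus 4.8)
The plan is to represent the increment as a time integral of the right-hand side of the governing dynamics \eqref{eq:gradient_flow_rb} and then control the integrand uniformly in time. On the interval $[s_{m-1},s_m)$ the random batches $\Bat_{m,k}$, $\Bat^\dagger_{m,k}$ are frozen, so $\tK$ is a fixed (measurable) nonlinearity there and $\tX$ solves \eqref{eq:gradient_flow_rb}. Integrating from $s_{m-1}$ to $s$ gives
\[
\tX(s) - \tX(s_{m-1}) = -\int_{s_{m-1}}^s \left(\nabla_\X V(\tX(\sigma)) + \tK(\tX(\sigma))\right)\rd\sigma .
\]
Taking the mean-square norm $\|\cdot\|$ and applying Minkowski's integral inequality to pull the norm inside the integral, I would reduce the claim to a uniform-in-$\sigma$ bound $\|\nabla_\X V(\tX(\sigma)) + \tK(\tX(\sigma))\| \leq C$, after which
\[
\left\|\tX(s) - \tX(s_{m-1})\right\| \leq \int_{s_{m-1}}^s \left\|\nabla_\X V(\tX(\sigma)) + \tK(\tX(\sigma))\right\| \rd\sigma \leq C(s - s_{m-1}) \leq Ch
\]
follows at once, since $s - s_{m-1} \leq h = s_m - s_{m-1}$.

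Next I would bound the two pieces of the integrand separately. For the random-batch kernel, $\tK(\tX) = \sum_{k=1}^{T/\tau}\tau\,\tua_k\,\tu_k$ is a finite sum over the $T/\tau$ time slices of products of batch-averaged Gaussian wave packets. Each $G_j$ and $G_j^\dagger$ carries an amplitude $A$ governed by the ODE system \eqref{eqn:FGAodes}, whose solution depends smoothly on its data; hence the amplitudes, and with them the $L^2(\Omega)$ norms of the Gaussians and their time derivatives $\dG_j$, $\dG_j^\dagger$, remain bounded, giving $\|\tK(\tX)\| \leq C$ uniformly in $\sigma$ and in the batch realization. For the regularization term I would invoke the standing assumption that $\nabla_\X V$ has polynomial growth, so that $|\nabla_\X V(\tX)(x)| \leq C(1 + |\tX(x)|^\ell)$ pointwise for some $\ell$; squaring, integrating over $\Omega$, and taking expectation, this combines with the uniform moment bound $\sup_t\E|\tX|^q \leq C_q$ of the preceding lemma (applied with $q = 2\ell$) to yield $\|\nabla_\X V(\tX(\sigma))\| \leq C$ independently of $\sigma$.

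The main obstacle is making the uniform bound on $\|\nabla_\X V(\tX)\|$ rigorous: polynomial growth only controls $\nabla_\X V$ by a power of $|\tX|$, so the estimate genuinely needs the right moment of $\tX$ to be finite and \emph{uniform in time}, which is exactly what the previous lemma supplies with the matched exponent $q = 2\ell$. Once the two constants are in hand, their sum bounds the integrand uniformly and the $Ch$ estimate is immediate. I would finally remark that the constant $C$ is independent of the iteration index $m$ and of the particular draw of the random batches, since both ingredient bounds are uniform in $\sigma$ and hold in expectation, which is what makes this increment estimate usable in the subsequent convergence proof.
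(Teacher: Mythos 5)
Your proposal is correct and rests on exactly the same ingredients as the paper's proof, namely a uniform mean-square bound on $\nabla_\X V(\tX)$ (via polynomial growth plus the moment bound of the preceding lemma) and on $\tK(\tX)$ (via boundedness of the FGA Gaussians governed by \eqref{eqn:FGAodes}); the only cosmetic difference is that you integrate the dynamics and apply Minkowski's integral inequality, whereas the paper differentiates $\|\tX(s)-\tX(s_{m-1})\|^2$ and applies H\"older's inequality, two interchangeable ways of converting a uniformly bounded drift into the $Ch$ increment estimate.
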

	\begin{proof}
		Direct computation shows
		\begin{equation*}
			\dds{}\left\|\tX(s)-\tX(s_{m-1})\right\|^2
			= 
			-2\,\E\left[\left(\tX(s)-\tX(s_{m-1})\right)\,\left(\nabla_{\X}V\left(\tX(s)\right)+\tK\left(\tX(s)\right)\right)\right],
		\end{equation*}
		then by H\"older's inequality, one has
		\begin{equation*}
			\dds{}\left\|\tX(s)-\tX(s_{m-1})\right\|^2
			\leq 
			C\left\|\tX(s)-\tX(s_{m-1})\right\|\left(\left\|\nabla_{\X}V\left(\tX(s)\right)\right\|+\left\|\tK\left(\tX(s)\right)\right\|\right).
		\end{equation*}
		Note that $\nabla_\X V \leq C(1+|\X|^q)$ for some $q$ and thus $\|\nabla_{\X}V\|$ is bounded; and $\tK$ is a cross-corelation of two wavefield constructed from Gaussians where the Gaussians are determined by a set of ODEs depending on the velocity model $\tX$ smoothly, so Gaussians are bounded and so is $\|\tK\|$. Thus
		\begin{equation*}
			\dds{}\left\|\tX(s)-\tX(s_{m-1})\right\|^2
			\leq 
			C\left\|\tX(s)-\tX(s_{m-1})\right\|,
		\end{equation*}
		and then the estimate \eqref{eqn:X-inc} follows.
	\end{proof}

	\begin{lem}\label{lemma:Z_inc}
		For $s\in[s_{m-1},s)$,
		\begin{equation}
			\left\|Z(s)-Z(s_{m-1})\right\| \leq Ch ,
		\end{equation}
		and
		\begin{equation}
			\E\left|\left(Z(s)-Z(s_{m-1})\right)\,\chi_m\left(\X(s)\right)\right| 
			\leq Ch\left[\left(\|Z(s)\| + \|Z(s)\|^2\right) + h\right]
			+ \f{h\tau}{p}  \|\Lambda\|_\infty.
		\end{equation}
	\end{lem}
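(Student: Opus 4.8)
The plan is to work from the differential equation satisfied by the error $Z=\tX-\X$. Subtracting \eqref{eq:gradient_flow_full} from \eqref{eq:gradient_flow_rb} and writing $\tK(\tX)=\K(\tX)+\chi_m(\tX)$ gives
\begin{equation*}
\dds{Z} = -\left(\nabla_\X V(\tX)-\nabla_\X V(\X)\right) - \left(\K(\tX)-\K(\X)\right) - \chi_m(\tX),
\end{equation*}
and integrating over $[s_{m-1},s]$,
\begin{equation*}
Z(s)-Z(s_{m-1}) = -\int_{s_{m-1}}^s\left[\left(\nabla_\X V(\tX)-\nabla_\X V(\X)\right)+\left(\K(\tX)-\K(\X)\right)\right]\rd s' - \int_{s_{m-1}}^s\chi_m(\tX(s'))\,\rd s'.
\end{equation*}
For the first inequality I would use the triangle inequality $\norm{Z(s)-Z(s_{m-1})}\le\norm{\tX(s)-\tX(s_{m-1})}+\norm{\X(s)-\X(s_{m-1})}$: the first term is $\le Ch$ by the previous lemma, and the second is controlled by the same $O(h)$ argument applied to the deterministic flow \eqref{eq:gradient_flow_full}, where $\nabla_\X V$ and $\K$ are bounded. (One can also read it off the integral representation directly, since each integrand is bounded by the $L^q$ bounds on $\X,\tX$, the polynomial growth of $\nabla_\X V$, and the boundedness of $\K$ and $\chi_m$ as cross-correlations of Gaussians, over an interval of length $\le h$.)

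For the second estimate I multiply the integral representation of $Z(s)-Z(s_{m-1})$ by $\chi_m(\X(s))$ and take $\E\abs{\,\cdot\,}$, separating a \emph{drift} contribution and a \emph{fluctuation} contribution. In the drift contribution, the Lipschitz continuity of $\K$ and the local Lipschitz continuity of $\nabla_\X V$ (its polynomially growing Hessian combined with the uniform moment bounds on $\X,\tX$) give $\abs{(\nabla_\X V(\tX)-\nabla_\X V(\X))+(\K(\tX)-\K(\X))}\le C\abs{Z(s')}$; together with the boundedness $\abs{\chi_m(\X(s))}\le C$ and $\norm{Z(s')}\le\norm{Z(s)}+Ch$ from the first inequality, integration over a window of length $\le h$ yields a bound of the form $Ch\norm{Z(s)}+Ch^2$.

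The fluctuation contribution is where the variance term is produced, and is the delicate step. The key is to replace the random argument by the deterministic one, writing $\chi_m(\tX(s'))=\chi_m(\X(s))+\left[\chi_m(\tX(s'))-\chi_m(\X(s))\right]$. The leading piece gives $\int_{s_{m-1}}^s\chi_m(\X(s))^2\,\rd s'$, whose expectation is controlled \emph{exactly} by Lemma~\ref{lemma:var}: since $\X(s)$ is deterministic, $\E\chi_m^2(\X(s))=\left(\frac1p-\frac1N\right)\tau\Lambda\le\frac{\tau}{p}\|\Lambda\|_\infty$, and the factor $s-s_{m-1}\le h$ produces precisely the term $\frac{h\tau}{p}\|\Lambda\|_\infty$. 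For the remaining piece I use the Lipschitz continuity of $\chi_m$ with $\abs{\tX(s')-\X(s)}\le\abs{Z(s')}+Ch$ and the boundedness of $\chi_m(\X(s))$; after integrating, a Young's inequality on the resulting cross terms $\norm{Z(s')}\cdot(\cdots)$ yields the $Ch\norm{Z(s)}$, $Ch\norm{Z(s)}^2$, and $Ch^2$ contributions. Collecting the drift and fluctuation bounds gives the claim.

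The main obstacle is the fluctuation term: because $\chi_m(\tX(s'))$ and $\chi_m(\X(s))$ are built from the same step-$m$ random batch, they are correlated, so one cannot exploit the mean-zero property $\E\chi_m(\X(s))=0$ inside the absolute value. The decomposition above is what makes it tractable, isolating the genuine variance $\E\chi_m^2(\X(s))$ (handled by Lemma~\ref{lemma:var}) and relegating the argument mismatch $\tX(s')\mapsto\X(s)$ to higher order via the Lipschitz bound and the increment estimate. A secondary technical point is that $\nabla_\X V$ is only locally Lipschitz, so the polynomial-growth assumption and the uniform-in-$s$ moment bounds on $\X,\tX$ must be invoked to keep the drift contribution bounded.
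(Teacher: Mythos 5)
Your proposal is correct and follows essentially the same route as the paper: integrate the equation for $Z$ over $[s_{m-1},s]$, multiply by $\chi_m(\X(s))$, reduce the genuine variance contribution to $\E\chi_m^2$ evaluated at the deterministic $\X$ so that Lemma~\ref{lemma:var} applies, and absorb everything else via Lipschitz continuity, the moment bounds, and the increment estimates. The only difference is bookkeeping — the paper isolates $\chi_m(\X)$ directly in the decomposition $\tK(\tX)-\K(\X)=(\tK(\tX)-\tK(\X))+\chi_m(\X)$ and applies Young's inequality, while you isolate $\chi_m(\tX)$ and re-center it at $\X(s)$ — which yields the same bound.
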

	\begin{proof}
		\begin{equation*}
			\dds{Z} 
			=  - \nabla_\X V(\tX) + \nabla_\X V(\X) - \tK(\tX) + \K(\X) 
		\end{equation*}
		thus
		\begin{equation*}
			\f12\dds{Z^2}  \leq\,  - (r-L)Z^2, 
		\end{equation*}
		which implies that, for $s\in[s_{m-1},s_m)$, one has
		\begin{equation*}
			|Z(s)|\leq|Z(s_{m-1})|+Ch , 
			\quad\text{and }
			\|Z(s)-Z(s_{m-1})\|\leq Ch .
		\end{equation*}
		\begin{equation*}
			-\dds{Z} 
			= \nabla_\X V(\tX) - \nabla_\X V(\X)  + \tK(\tX) - \tK(\X) + \chi_m(\X)
		\end{equation*}
		Since
		\begin{equation*}
			\left|\nabla_\X V(\tX) - \nabla_\X V(\X)\right|
			\leq \left|(\tX - \X)\cdot\nabla_{\X}^2V(\X^*)\right|,
		\end{equation*}
		then
		\begin{align*}
			&\E\left|\left(\nabla_\X V(\tX(s')) - \nabla_\X V(\X(s'))\right)\chi_m(\X(s))\right|
			\\
			&\leq \left\|\chi_m(\X(s))\right\|_\infty\left\|(\tX(s') - \X(s'))\right\|
			\left(\E\left[|\tX(s')|^{q_1}+|\X(s')|^{q_1}\right]^2\right)^{1/2}
			\leq C \|Z(s')\|.
		\end{align*}
		Therefore
		\begin{align*}
			&\E\left|\left(Z(s)-Z(s_{m-1})\right)\,\chi_m\left(\X(s)\right)\right| 
			\\
			\leq\,& 
			\int_{s_{m-1}}^s \rd s'\,
			\left\{ 
			C\,\|Z(s')\|
			+ 
			\E\left|\chi_m\left(\X(s')\right)\,\chi_m\left(\X(s)\right)\right|
			+ 
			\E\left|\left(\tK(\tX(s'))-\tK(\X(s'))\right)\,\chi_m\left(\X(s)\right)\right|
			\right\}
			\\
			\leq\,& 
			\int_{s_{m-1}}^s \rd s'\,
			\left\{	
			C\,\|Z(s')\|
			+ \E\left[\chi_m\left(\X(s)\right)^2\right] 
			+ \f12\,\E\left[\chi_m\left(\X(s')\right)^2\right] 
			+ \f12\, \E\left[\left(\tK(\tX(s'))-\tK(\X(s'))\right)^2\right]
			\right\}
		\end{align*}
		The second and third terms are controlled by Lemma \ref{lemma:var} since $\X$ is independent of the random batch, and thus
		\begin{equation*}
			\E\left[\chi_m\left(\X(s)\right)^2+\chi_m\left(\X(s)\right)^2\right] 
			\leq C \left(\f1{p}-\f1{N}\right)\tau  \|\Lambda\|_\infty.
		\end{equation*}
		The fourth term is controlled by using the Lipschitz continuity of $\tK$:
		\begin{equation*}
			\E\left[\left(\tK(\tX)-\tK(\X)\right)^2\right]
			\leq L^2\,\E\left[\left(\tX-\X\right)^2\right]
			=L^2\|Z\|^2.
		\end{equation*}
		Then
		\begin{equation*}
			\E\left|\left(Z(s)-Z(s_{m-1})\right)\,\chi_m\left(\X(s)\right)\right| 
			\leq
			C\left[\left(\|Z(s)\| + \|Z(s)\|^2\right)h + h^2\right]
			+ \left(\f1{p}-\f1{N}\right)h\tau  \|\Lambda\|_\infty .
		\end{equation*}
	\end{proof}

	\subsection{Main theorem}\label{sec:mainthm}
	In this subsection, we present the main convergence theorem. As in Remark~\ref{rem:idea}, the main idea of the proposed method is to use random batch method for the construction of randomized wavefields and the corresponding sensitivity kernels, and one can replace FGA by any other efficient wave propagation solvers. Therefore, we shall only focus the convergence of random batch method to the deterministic gradient decent method, assuming that the chosen wave propagation solvers can provide convergent numerical results. We refer to \cite{LuYa:CPAM, chai2021fgaconv} for the convergent results of the FGA solvers. 
	
	\begin{thm}\label{thm:1}
		As the iteration step $h$ goes to zero, $\tX$ converges to $\X$  in the mean square sense. More precisely, we have the following estimate
		\begin{equation}
			\sup_{s\ge0}\|Z(s)\| \leq  C\sqrt{\f{h\tau}{p} + Ch^2}.
		\end{equation}
	\end{thm}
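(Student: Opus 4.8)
The plan is to derive a Gr\"onwall-type differential inequality for $\phi(s) := \|Z(s)\|^2 = \E|Z(s)|^2$ and to close it subinterval by subinterval. Subtracting \eqref{eq:gradient_flow_full} from \eqref{eq:gradient_flow_rb} and inserting $\tK(\X)$ gives
\begin{equation*}
\dds{Z} = -\left(\nabla_\X V(\tX) - \nabla_\X V(\X)\right) - \left(\tK(\tX) - \tK(\X)\right) - \chi_m(\X),
\end{equation*}
so that
\begin{equation*}
\f12\dds{}\|Z\|^2 = -\E\left[Z\cdot\left(\nabla_\X V(\tX) - \nabla_\X V(\X)\right)\right] - \E\left[Z\cdot\left(\tK(\tX)-\tK(\X)\right)\right] - \E\left[Z\cdot\chi_m(\X)\right].
\end{equation*}
I would bound the first term below by $r\|Z\|^2$ using strong convexity of $V$ (the monotonicity of $\nabla_\X V$ with parameter $r$), and the second in absolute value by $L\|Z\|^2$ via the Lipschitz continuity of $\tK$ and Cauchy--Schwarz. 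Assuming $r>L$, these combine into a dissipative term $-(r-L)\|Z\|^2$, and the entire difficulty is funneled into the last cross term $\E[Z\cdot\chi_m(\X)]$.

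The hard part, and the crux of the whole argument, is that $Z(s)$ is \emph{not} measurable with respect to $\mcl{F}_{m-1}$, since it depends on the batch drawn at step $m$; hence one cannot invoke the zero-mean property \eqref{eqn:exp} directly. The remedy is the standard random-batch splitting $Z(s) = Z(s_{m-1}) + (Z(s) - Z(s_{m-1}))$. For the frozen part, $Z(s_{m-1})$ is $\mcl{F}_{m-1}$-measurable while $\X(s)$ is \emph{deterministic} (it solves the unperturbed flow \eqref{eq:gradient_flow_full}); because the batch at step $m$ is independent of $\mcl{F}_{m-1}$, conditioning and \eqref{eqn:exp} give $\E[\chi_m(\X(s))\mid\mcl{F}_{m-1}] = 0$, whence $\E[Z(s_{m-1})\cdot\chi_m(\X(s))] = 0$. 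The increment part is exactly what Lemma~\ref{lemma:Z_inc} estimates, namely
\begin{equation*}
\left|\E\left[\left(Z(s)-Z(s_{m-1})\right)\chi_m(\X(s))\right]\right| \leq C\left[\left(\|Z(s)\|+\|Z(s)\|^2\right)h + h^2\right] + \f{h\tau}{p}\|\Lambda\|_\infty.
\end{equation*}

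Substituting everything, I obtain on each $s\in[s_{m-1},s_m)$ the inequality
\begin{equation*}
\f12\dds{}\phi \leq -(r-L)\phi + C\left[\left(\sqrt{\phi}+\phi\right)h + h^2\right] + \f{h\tau}{p}\|\Lambda\|_\infty.
\end{equation*}
Next I would use Young's inequality to absorb $C\sqrt{\phi}\,h \leq \epsilon\phi + C_\epsilon h^2$ and take $h$ small enough that the $C h\phi$ term is dominated by the dissipation; choosing $\epsilon$ small keeps the coefficient of $\phi$ strictly negative, leaving $\phi' \leq -2a\phi + 2b$ with $a>0$ and $b = C'(h^2 + h\tau/p)$. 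Gr\"onwall's inequality on each subinterval yields $\phi(s) \leq \phi(s_{m-1})e^{-2a(s-s_{m-1})} + (b/a)\left(1-e^{-2a(s-s_{m-1})}\right)$.

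Finally, since $\phi$ is continuous across the grid points $s_m$ and $\phi(0)=0$, a short induction over $m$ propagates the uniform ceiling $\phi(s)\leq b/a$ to all $s\geq0$; taking square roots gives $\sup_{s\geq0}\|Z(s)\| \leq C\sqrt{h\tau/p + C h^2}$, as stated. I expect the genuinely delicate points to be the measurability bookkeeping that makes the frozen term vanish, and the verification that the constants $a$, $b$ are uniform in $m$ (independent of which subinterval one is on), which is precisely what allows the per-interval Gr\"onwall bounds to chain into the claimed supremum-in-$s$ estimate rather than a bound that degrades over iterations.
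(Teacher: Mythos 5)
Your proposal is correct and follows the same overall strategy as the paper's proof: a dissipation term $-(r-L)\|Z\|^2$ from strong convexity of $V$ minus the Lipschitz constant of the kernel, freezing at $s_{m-1}$ to exploit the zero-mean property \eqref{eqn:exp}, Lemma~\ref{lemma:Z_inc} for the increment, and Gr\"onwall to close. The one place where you genuinely diverge is the choice of decomposition of the drift: you insert $\tK(\X)$, so the residual noise term is $\chi_m(\X(s))$ evaluated along the \emph{deterministic} flow, whereas the paper inserts $\K(\tX)$ and carries $\chi_m(\tX(s))$ along the random flow. Your choice pays off: since $\X(s)$ is deterministic and the step-$m$ batches are independent of $\mcl{F}_{m-1}$, the frozen term $\E[Z(s_{m-1})\,\chi_m(\X(s))]$ vanishes outright and the cross term splits into only two pieces (the vanishing one plus the increment handled by Lemma~\ref{lemma:Z_inc}), whereas the paper must further freeze $\tX(s)$ at $\tX(s_{m-1})$ and control four pieces $I_1,\dots,I_4$, two of which ($I_2$, $I_4$) require the Lipschitz continuity of $\chi_m$ and the increment bound $\|\tX(s)-\tX(s_{m-1})\|\le Ch$. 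Both routes land on the same differential inequality $\dds{}\|Z\|^2 \leq -c\,\|Z\|^2 + Ch\|Z\| + C h\tau/p + Ch^2$ and the same final estimate; your version is leaner in the bookkeeping, and your explicit Young-inequality absorption of the $Ch\|Z\|$ term plus the induction over subintervals (using continuity of $\|Z\|^2$ at the grid points and $Z(0)=0$) makes rigorous a step the paper leaves implicit.
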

	\begin{proof}
		\begin{align*}
			\f12\dds{}\E Z^2
			=& - \E\left[ Z(s) \, \left(  \nabla_\X V (\tX(s)) - \nabla_\X V \left(\X(s)\right)  +  \K(\tX(s)) - \K\left(\X(s)\right) \right) \right]
			-\E\left[Z(s)\,\chi_m\left(\tX(s)\right)\right]
			\\
			\leq\, &
			-(r-L)\E Z^2 	 - \E\left[Z(s)\,\chi_m\left(\tX(s)\right)\right] .
		\end{align*}
		Let
		\begin{equation*}
			R(s) := \E\left[Z(s)\,\chi_m\left(\tX(s)\right)\right].
		\end{equation*}
		\begin{align*}
			R(s) =&~~ 
			\E\left[Z(s_{m-1})\,\chi_m\left(\tX(s_{m-1})\right)\right]
			\\&+\E\left[Z(s_{m-1})\,\left(\chi_m\left(\tX(s)\right) - \chi_m\left(\tX(s_{m-1})\right)\right)\right]
			\\&+\E\left[\left(Z(s)-Z(s_{m-1})\right)\,\chi_m\left(\X(s)\right)\right]
			\\&+\E\left[\left(Z(s)-Z(s_{m-1})\right)\,\left(\chi_m\left(\tX(s)\right) - \chi_m\left(\X(s)\right)\right)\right]
			\\
			=&:\,I_1+I_2+I_3+I_4
		\end{align*}
		For the first term,
		\begin{equation*}
			I_1 
			= \E\left[\E\left[\left.Z(s_{m-1})\,\chi_m\left(\tX(s_{m-1})\right)\right|\mcl{F}_{m-1}\right]\right]
			= \E\left[Z(s_{m-1})\,\E\left[\left.\chi_m\left(\tX(s_{m-1})\right)\right|\mcl{F}_{m-1}\right]\right]
			= 0 .
		\end{equation*}
		For the second term,
		\begin{align*}
			I_2
			&=\E\left[Z(s_{m-1})\,\left(\chi_m\left(\tX(s)\right) - \chi_m\left(\tX(s_{m-1})\right)\right)\right]
			\\
			&\leq C\left\|Z(s_{m-1})\right\|\left\|\chi_m\left(\tX(s)\right) - \chi_m\left(\tX(s_{m-1})\right)\right\|
			\leq 2LC\left\|Z(s_{m-1})\right\|\left\|\tX(s) - \tX(s_{m-1})\right\|
			\\
			&
			\leq
			C\|Z(s)\|h + Ch^2,
		\end{align*}
		where for the second inequality we have used the Lipschitz continuity of $K$ and $\tK$. 
		For the third term, by Lemma \ref{lemma:Z_inc}
		\begin{equation*}
			I_3\leq Ch\left[\left(\|Z(s)\| + \|Z(s)\|^2\right) + h\right]
			+ \f{h\tau}{p}  \|\Lambda\|_\infty.
		\end{equation*}
		For the fourth term,
		\begin{equation*}
			I_4\leq \left\|Z(s)-Z(s_{m-1})\right\|\,\left\|\chi_m\left(\tX(s)\right) - \chi_m\left(\X(s)\right)\right\|
			\leq C\|Z(s)\|h
			.
		\end{equation*}
		Hence,
		\begin{equation*}
			R(s) \leq C\|Z(s)\|h + C\f{h\tau}{p} + Ch^2,
		\end{equation*}
		and
		\begin{equation*}
			\dds{}\|Z\|^2 \leq -(r-L) \|Z\|^2 + Ch\|Z\| + C\f{h\tau}{p} + Ch^2,
		\end{equation*}
		which implies
		\begin{equation*}
			\sup_{s\ge0}\|Z(s)\|^2 \leq  C\f{h\tau}{p} + Ch^2.
		\end{equation*}
	\end{proof}

{

	\subsection{Random batch on source-receiver pairs} In this subsection, as a byproduct and by essentially following the same proof strategies in Sections~\ref{sec:preliminary} and \ref{sec:mainthm}, we provide convergence results for the accelerated full-waveform inversion method using dynamic mini-batches proposed in \cite{van2020accelerated}. The method uses spectral element methods to compute the synthetic sesmograms, and apply random batches on the source-receiver pairs. For convenience, we briefly review the method using consistent notations of this paper. Assume there are $N_\rec$ receiver stations and $N_\src$ earthquake events, one can rewrite the sensitivity kernel by
	\begin{align}
	    \K = \f1{N_\rec N_\src}\sum_{r=1}^{N_\rec}\sum_{s=1}^{N_\src}\K_{rs},
	    \quad\text{where}\quad
	    \K_{rs} = \int_0^T\rd t \, \rho(x)\,\p_tu^\dagger(T-t,x;x_r)\,\p_t{u}(t,x;x_s),
	\end{align}
	where one uses spectral element method to solve $u(\cdot,\cdot;x_s)$ by the wave equation \eqref{eqn:wave3d} with the source located at $x=x_s$, and 
	$u^\dagger(\cdot,\cdot;x_r)$ by the adjoint wave equation \eqref{eqn:adjoint} with the adjoint source function
	\begin{align*}
	    s^\dagger(t,x) = [u_\obs-u](T-t,x)\,\delta_d(x-x_r).
	\end{align*}
	To apply the random batch method, in each iteration step $m$ we choose a receiver index subset $\BatR_m\subset\{1,2,\cdots,N_\rec\}$ and a source index subset $\BatS_m\subset\{1,2,\cdots,N_\src\}$, randomly and independently. The random batch kernel is then given by
	\begin{align}
	    \tK=\f1{p_\rec p_\src}\sum_{r\in\BatR_m}\sum_{s\in\BatS_m}\K_{rs}.
	\end{align}
	Now one can use this kernel in the main loop of Algorithm \ref{sgdrb} to update the velocity model $\X$. 
	
	Our contribution here is to give a convergence result for this random batch method in source-receiver paris in analog to Theorem \ref{thm:1}. Since the strategy of the proof is essentially the same, we only state the following key lemma.
	\begin{lem}\label{lemma:varRec}
		Let $\Y$ be a velocity model, fixed and determined, and $\chi_m(\Y):={\tK}(\Y)-\K(\Y)$. Then it holds that $\E_\src[\chi_m(\Y)] = \E[\chi_m(\Y)]  = 0$ and
		\begin{align}
		    \label{eqn:varRec}
			\E_\src[\chi_m^2(\Y)] & = \left(\f1{p_\rec}-\f1{N_\rec}\right) \Lambda_\rec,
			\\
		    \label{eqn:varRecSrc}
			\E[\chi_m^2(\Y)] & = \left(\f1{p_\src}-\f1{N_\src}\right) \E\Lambda_\src + \E_\src[\chi_m^2(\Y)],
		\end{align}
		where we have used the short notation $\E_\src$ for conditional expectation $\E_\src[\,\cdot\,]:=\E[\,\cdot\,\left.\right|\,\BatS_m={1,\cdots,N_\obs}]$, and 
		\begin{align}
			\Lambda_\rec 
			= \f1{N_\rec-1} \sum_{r=1}^{N_\rec} 
			\left(\K_r\,-\,\f1{N_\rec}\sum_{l=1}^N\K_l\right)^2,
			&\quad\text{with}\quad\K_r=\f1{N_\src}\sum_{s=1}^{N_\src}\K_{rs},
			\\
			\Lambda_\src
			= \f1{N_\src-1} \sum_{r=1}^{N_\src} 
			\left(\tK_s\,-\,\f1{N_\src}\sum_{l=1}^N\tK_l\right)^2,
			&\quad\text{with}\quad\tK_s=\f1{p_\rec}\sum_{r\in\BatR_m}\K_{rs}.
		\end{align}
	\end{lem}
	\begin{proof}
	It is straightforward to show $\E_\src[\chi_m(\Y)] = \E[\chi_m(\Y)] = 0$.
	To show \eqref{eqn:varRec}, we compute
	\begin{align*}
	    \E_\src[\chi_m^2(\Y)] & = \E\left[\left(\f1{p_\rec}\sum_{r\in\BatR_m}\K_r-\f1{N_\rec}\sum_{r=1}^{N_\rec}\K_r\right)^2\right]
	    = \f1{p_\rec^2}\E\left(\sum_{r\in\BatR_m}\K_r\right)^2-\f1{N_\rec^2}\left(\sum_{r=1}^{N_\rec}\K_r\right)^2.
	\end{align*}
	Notice that
	\begin{align*}
	    \E\left(\sum_{r\in\BatR_m}\K_r\right)^2 & = 
	    \sum_{r=1}^{N_\rec} \K_r^2 \, \Pb(r\in\BatR_{m}) 
			\,+\, \sum_{r,l:r\neq l} \K_r\,\K_l \, \Pb \left( r\in\BatR_{m} \text{ and } l\in\BatR_{m} \right)  
			\\
			& = \f{p_\rec}{N_\rec}\sum_{r=1}^{N_\rec} \K_r^2 
			\,+\,  \f{p_\rec(p_\rec-1)}{N_\rec(N_\rec-1)}\sum_{r,l:r\neq l} \K_r\,\K_l ,
	\end{align*}
	and thus
	\begin{align*}
	    \E_\src[\chi_m^2(\Y)] 
	    & = \left(\f1{p_\rec N_\rec}-\f1{N_\rec^2}\right)\sum_{r=1}^{N_\rec} \K_r^2 
			\,+\,  \left(\f{p_\rec-1}{p_\rec\,N_\rec(N_\rec-1)}-\f1{N_\rec^2}\right)\sum_{r,l:r\neq l} \K_r\,\K_l ,
		\\
		& = \left(\f1{p_\rec}-\f1{N_\rec}\right)\f1{N_\rec-1} \sum_{r=1}^{N_\rec} 
			\left(\K_r\,-\,\f1{N_\rec}\sum_{l=1}^{N_\rec}\K_l\right)^2 
			,	
	\end{align*}
yielding \eqref{eqn:varRec}. Then \eqref{eqn:varRecSrc} can be obtained by taking the expectation with respect to $\BatS_m$ and $\BatR_m$ separately. 
	\end{proof}
}	
	
	\section{Numerical examples}\label{sec:example}
	In this section, we present some synthetic tomography tests using random batch gradient reconstruction, where the wave equations are solved by FGA and the sensitivity kernel are constructed using \eqref{eq:k_rb}. 
	{ 
	Remark that the random batch gradient reconstruction method we proposed can be applied to any gradient-based iterative method, but the proof of the convergence may be more complicated than that in the previous section. So in the following subsections, we mainly use L-BFGS for the iterations and show the convergence numerically but leave the rigorous proof for further studies. We present one gradient descent example in Section \ref{sec:exp_gd}.  
	We remark that all the computations are performed on a Dell T7920 workstation with dual Intel Xeon Gold 6130 Processor(16 Cores, 22M Cache, 2.10 GHz) and compiled with GFORTRAN and MPICH.
	}
	
	\subsection{Full-waveform inversion for a 2D model}\label{sec:fwi2d}
	In the first example, we present a test using full-waveform inversion(FWI) to image a 2D (in $x-z$ plane) square region. As a proof of methodology, we set point receivers on the top and right of the square region, and set point sources aligning on the bottom and left of the square region.
	The target velocity field is set as follows,
	\begin{equation}\label{eqn:2dnewmodel}
		c(x,z) = C_0\left(1-\alpha\exp\left(-\f\beta{L^2}\left((x-x_{c1})^2+(z-z_c)^2\right)\right)+\alpha\exp\left(-\f\beta{L^2}\left((x-x_{c2})^2+(z-z_c)^2\right)\right)\right),
	\end{equation}
	where $C_0=2500\cunit$, $x_{c1}=1344\xunit$, $x_{c2}=1824\xunit$, $z_c=L=1584\xunit$, $\alpha=0.03$, $\beta=24.2$. 
	See FIG. \ref{fig:2Dfwi1}(a) for a demonstration of the setup. FWI iteration starts with the background velocity, that is $c_0\equiv2500\cunit$ homogeneously. In this example, the beam number $N=32766$, and $\epsilon = L/256 $.  
	
	We use FGA to simulate the forward and adjoint wave equations. To reconstruct the wavefields and kernels, we use two strategies to generate random batch,
	\begin{itemize}
		\item strategy 1: as we proposed in Section \ref{sec:GRwRB}, for each iteration step $m$ and time evolution step $k$, we choose such that $\{\BatG_{m,k},\,\BatG_{m,k}^\dagger\,:\,m\in\N,\,k=0,1,\cdots,T/\tau\}$ is independent.
		\item strategy 2: for each iteration step $m$ we choose two batches $\BatG_{m}$ and $\BatG_{m}^\dagger$ independently, and set $\BatG_{m,k}=\BatG_{m}$, $\BatG_{m,k}^\dagger=\BatG_{m}^\dagger$ for all $,k=0,1,\cdots,T/\tau$, that is, we lose the independence for time evolution steps.
	\end{itemize}
	
	In FIG. \ref{fig:2Dfwi1}(a) and (d), we plot the resulted velocities after four iteration steps using batch strategy 1 with sampling rate $p/N = 10\%$ and 2.5\%, respectively,
	and one can see the low-velocity region has already been captured (though there are blurs and artifacts). We also plot time-shots of the wavefields for both $10\%$ and $2.5\%$ reconstruction in FIG. \ref{fig:2Dfwi1}(b) and (e), respectively, and the kernels for both $10\%$ and $2.5\%$ reconstruction in FIG. \ref{fig:2Dfwi1}(c) and (f), respectively. 
	\begin{figure}
		\label{fig:2Dfwi1}
		\centering
		\subfigure[]{
			\label{fig:2Dfwi_result_100}
			\begin{minipage}[t]{0.32\linewidth}
				\centering
				\includegraphics[width=\linewidth]{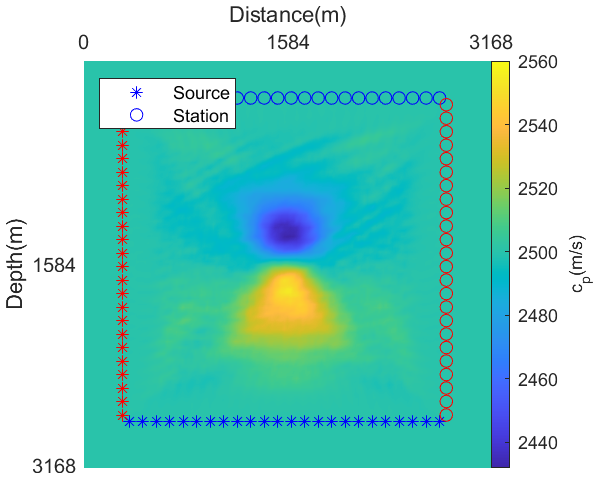}
			\end{minipage}%
		}%
		\subfigure[]{
			\label{fig:2Dfwi_wave_100}
			\begin{minipage}[t]{0.32\linewidth}
				\centering
				\includegraphics[width=\linewidth]{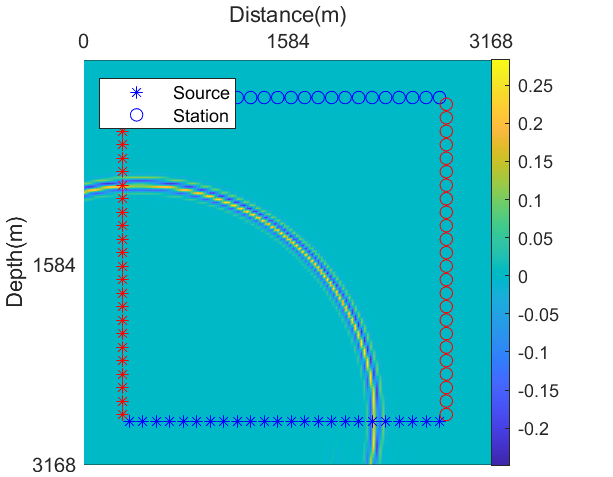} 
			\end{minipage}%
		}%
		\subfigure[]{
			\label{fig:2Dfwi_kernel_100}
			\begin{minipage}[t]{0.32\linewidth}
				\centering
				\includegraphics[width=\linewidth]{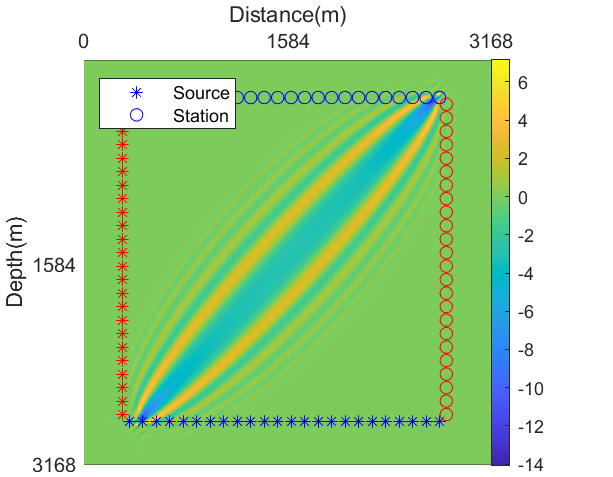}
			\end{minipage}%
		}%
		\\
		\subfigure[]{
			\label{fig:2Dfwi_result}
			\begin{minipage}[t]{0.32\linewidth}
				\centering
				\includegraphics[width=\linewidth]{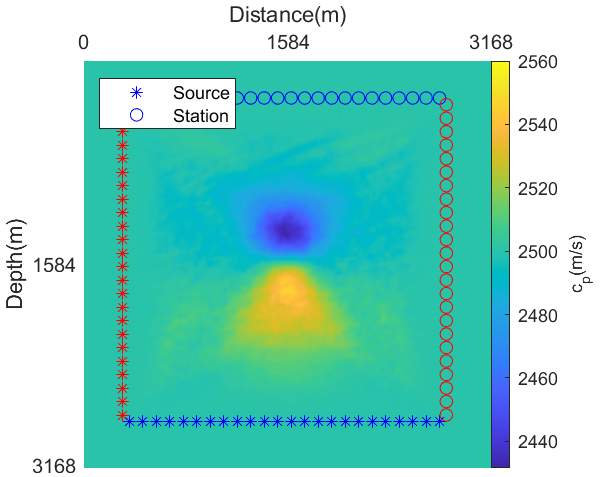}
			\end{minipage}%
		}%
		\subfigure[]{
			\label{fig:2Dfwi_wave_20}
			\begin{minipage}[t]{0.32\linewidth}
				\centering
				\includegraphics[width=\linewidth]{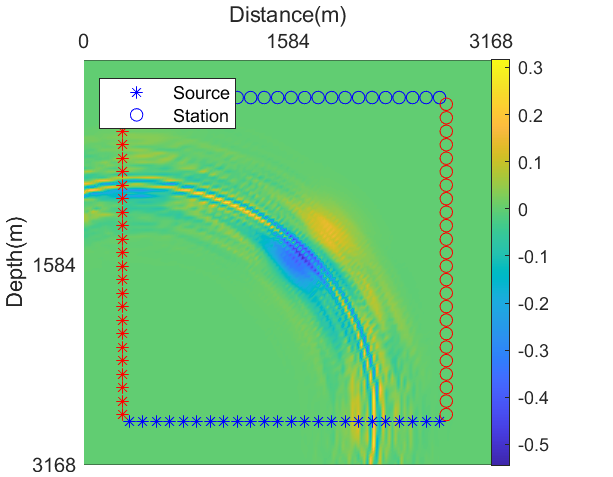} 
			\end{minipage}%
		}%
		\subfigure[]{
			\label{fig:2Dfwi_kernel_20}
			\begin{minipage}[t]{0.32\linewidth}
				\centering
				\includegraphics[width=\linewidth]{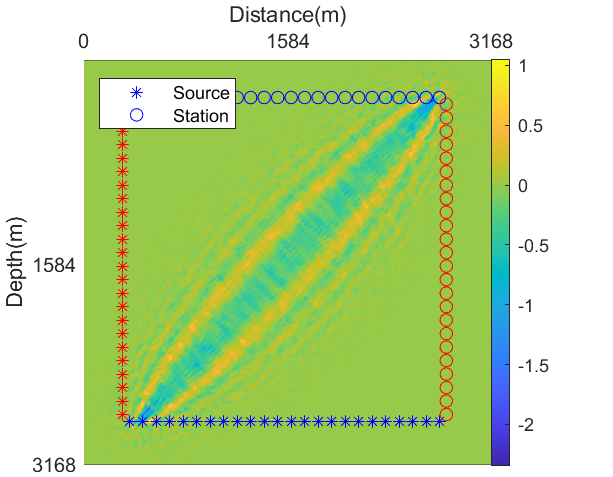} 
			\end{minipage}%
		}%
		\caption{Example \ref{sec:fwi2d}. 2D FWI results with batch strategy 1. (a) and (d) plot the resulted velocities after five and three iterations using batch strategy 1 with sampling rate $10\%$ and $2.5\%$, respectively. Comparisons of the same setup are also given for the wavefields ((b) and (e)) and kernels ((c) and (f)).}
	\end{figure}
	For a comparison, we use batch strategy 2 to generate batches and redo the test with the sampling rate $2.5\%$. In FIG. \ref{fig:2Dfwi2}, the inversion result is bad even the wavefield and kernel look ``okay". The decay of misfit functional for these two different strategies are shown in FIG. \ref{fig:2Dfwimis}.
	\begin{figure}
		\label{fig:2Dfwi2}
		\centering
		\subfigure[]{
			\label{fig:2Dfwi2_result}
			\begin{minipage}[t]{0.32\linewidth}
				\centering
				\includegraphics[width=\linewidth]{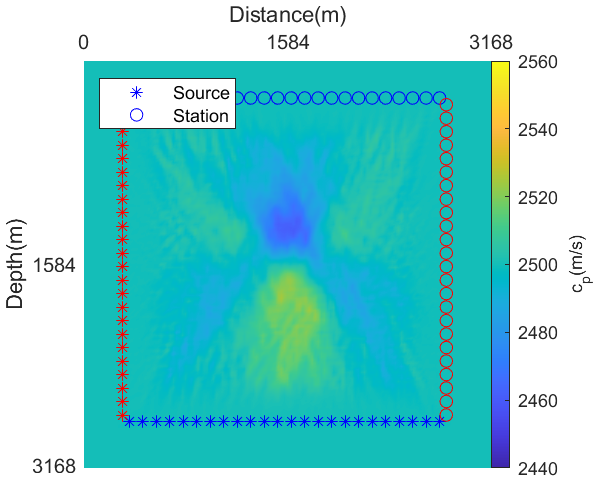}
			\end{minipage}%
		}%
		\subfigure[]{
			\label{fig:2Dfwi2_wave}
			\begin{minipage}[t]{0.32\linewidth}
				\centering
				\includegraphics[width=\linewidth]{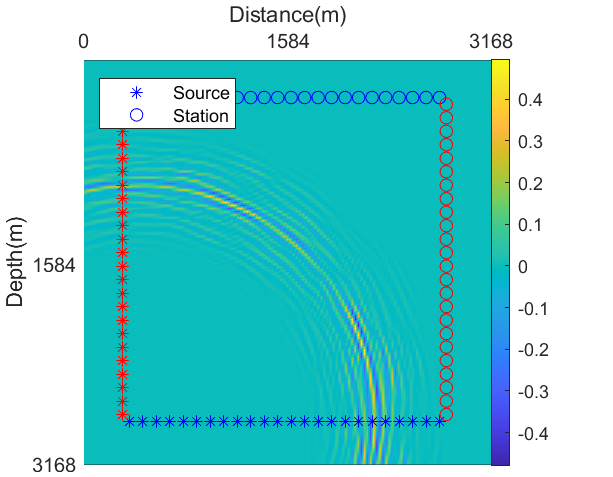}
			\end{minipage}%
		}%
		\subfigure[]{
			\label{fig:2Dfwi2_kernel}
			\begin{minipage}[t]{0.32\linewidth}
				\centering
				\includegraphics[width=\linewidth]{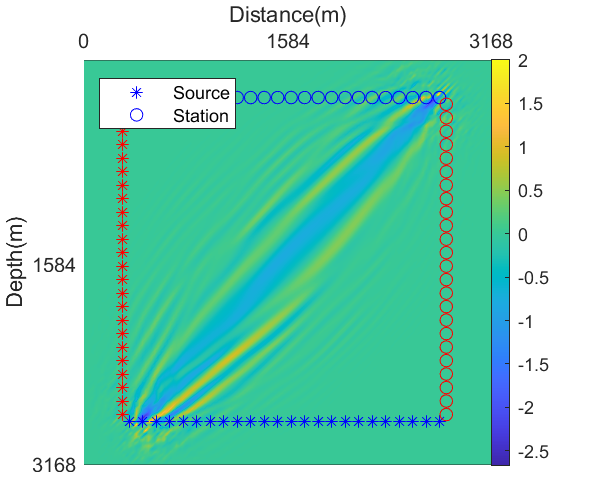}
			\end{minipage}%
		}%
		\caption{Example \ref{sec:fwi2d}. 2D FWI results with batch strategy 2 using the sampling rate $2.5\%$. After three iterations, we have: (a) velocity; (b) wavefield; (c) kernel. }
	\end{figure}
	\begin{figure}
		\label{fig:2Dfwimis}
		\centering
		\subfigure[]{
			\begin{minipage}[t]{0.6\linewidth}
				\centering
				\includegraphics[width=\linewidth]{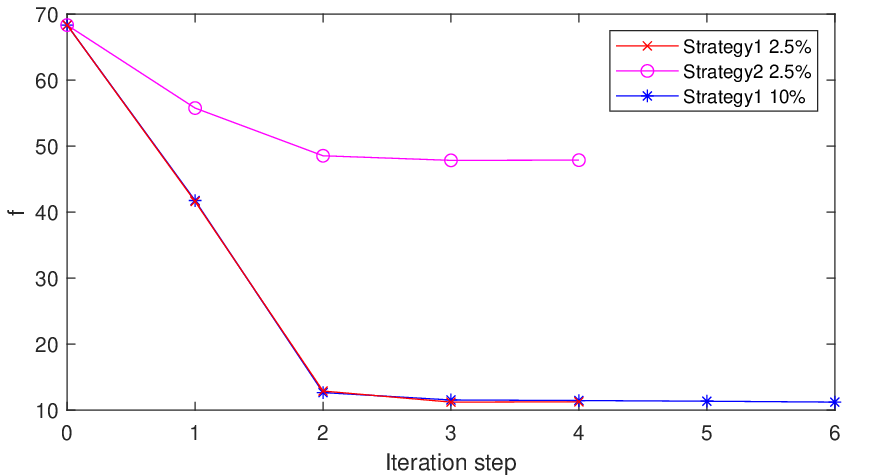}
			\end{minipage}%
		}%
		\caption{Example \ref{sec:fwi2d}. 2D FWI results: Decay of the misfit function.}
	\end{figure}
	
		We can see from FIG. \ref{fig:2Dfwi1}(b)(e) and \ref{fig:2Dfwi2}(b) that different strategies of batch generation in FGA catch similar shape of wavefront, this is because FGA as a ray-based asymptotic method gives correct ray-path information; the wavefields are smooth because they are reconstructed by complex-valued Gaussian functions. FIG. \ref{fig:2Dfwi1}(c)(f) and \ref{fig:2Dfwi2}(c) also show that the shape of kernels also looks similar as  Banana-Doughnuts. But one can see significant difference if looking at small scale structures: apparently there is roughness or ``noise-like" structure in FIG. \ref{fig:2Dfwi1}(f) generated by strategy 1, while FIG. \ref{fig:2Dfwi2}(c) by strategy 2 shows a smooth kernel. The reason for differences lies in that strategy 2 has no randomness in time evolution and so it is smooth when integrating (or summing, for numerical purpose) over time to get the kernel \eqref{eq:k_rb}, while strategy 1 use independently random batch for each time evolution step and so it shows more randomness.
		We should address here that it is the time-independence that helps strategy 1 attains a better convergence than strategy 2, which can be seen in the proof of Lemma \ref{lemma:var} that the computation of variance \eqref{eqn:var_1st} relies on the independence directly.

	
	\subsection{Travel-time inversion for a 2D gradually changing background model}\label{sec:tti2d}
	{
		The perturbation in the velocity field is small (3\%) in the previous subsection.
		As it has been observed in literature (\eg \cite{chai2018tomo}), the travel-time inversion has a much wider convergence zone than the full-waveform inversion, so when the perturbation is large, one can use travel-time inversion instead of full-waveform inversion to get a convergent result.
		To further test the performance of the proposed method, we look at a region with gradually changing background velocity and aim to image a target of low-velocity perturbation using travel-time inversion. As shown in FIG. \ref{fig:tti_var}(a), 48 stations are put near the top ground, 24 sources are put deep inside the earth near bottom of the target region of size $6336\xunit~\times~3168\xunit$, and the background velocity field has a lightly graduate change from 2500\cunit\; at the top ground to 3000\cunit\; at the deep bottom, and the velocity field is given by 
		\begin{equation}\label{eqn:2dgraduallychangemodel}
    		c(x,z) = \left(C_1(1-\frac{z}{2L}) + C_2\frac{z}{2L}\right) \left(1-\alpha\exp\left(-\f\beta{L^2}\left((x-x_c)^2+(z-z_c)^2\right)\right)\right),
	    \end{equation}
	    where $C_1=2500\cunit$, $C_2=3000\cunit$, $z_c=L=1584\xunit$,$x_c=3168\xunit$, $\alpha=0.1$, $\beta=24.2$.
	    travel-time inversion iteration starts with the background velocity $c_0$ which is given by \eqref{eqn:2dgraduallychangemodel} with the same parameter values specified above except that $\alpha=0$. In this numerical example, the beam number $N=32768$,and $\epsilon = L/256 $.}  We use FGA to simulate the forward and adjoint wave equations with sampling rate $p/N=$20\% to generate random batch for wavefield and kernel reconstruction. In FIG. \ref{fig:tti_var}(b), we plot the resulted velocity model $c_{10}$ after 10 iteration steps, and one can see a good match with the target velocity model, this also can be seen in FIG. \ref{fig:tti_var}(c) where $c_{10}-c_0$ is plotted; FIG. \ref{fig:tti_var}(d) shows the decay of misfit functional and one can see the iteration has convergent numerically.
	    
		\begin{figure}[ht]
			\label{fig:tti_var}
			\centering
			\subfigure[]{
				\begin{minipage}[t]{0.5\linewidth}
					\centering
					\includegraphics[width=\linewidth]{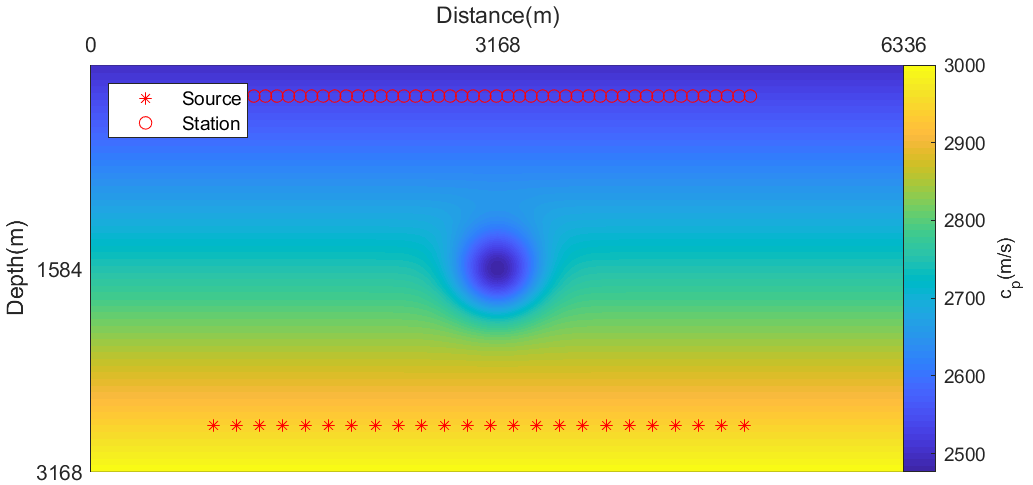}
				\end{minipage}%
			}%
			\subfigure[]{
				\begin{minipage}[t]{0.5\linewidth}
					\centering
					\includegraphics[width=\linewidth]{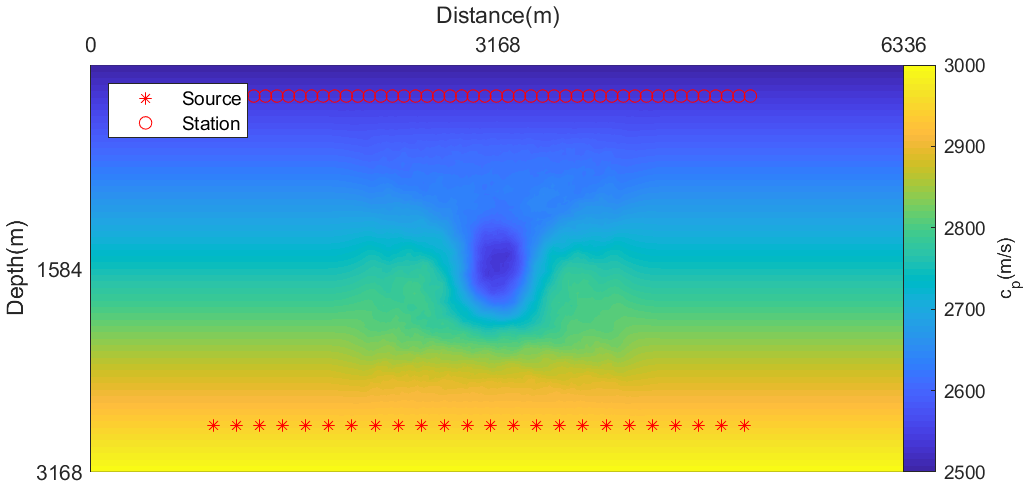}
				\end{minipage}%
			}%
			\quad
			\subfigure[]{
				\begin{minipage}[t]{0.5\linewidth}
					\centering
					\includegraphics[width=\linewidth]{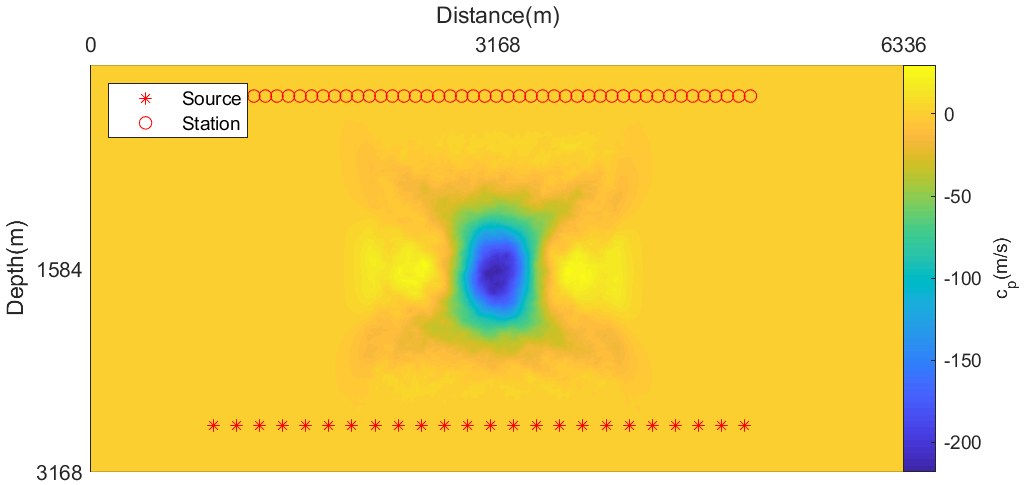}
				\end{minipage}%
			}%
			\subfigure[]{
				\begin{minipage}[t]{0.5\linewidth}
					\raggedright
					\includegraphics[width=0.98\linewidth]{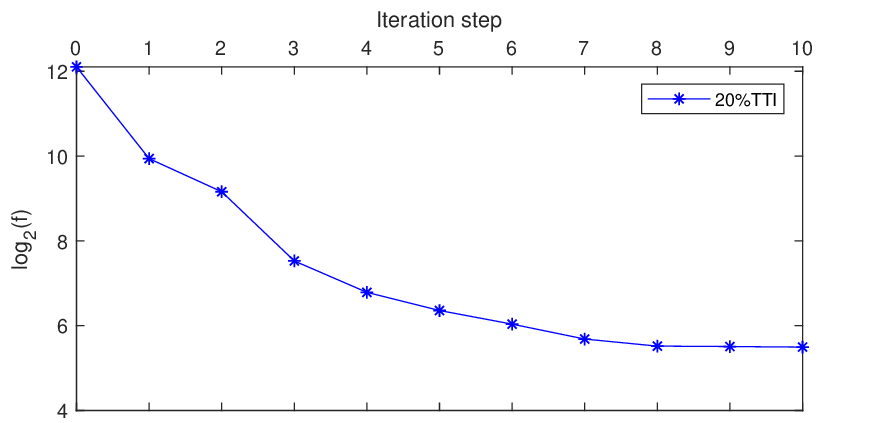}
					\label{fig:tti_var_misfit}
				\end{minipage}%
			}%
			\caption{Example \ref{sec:tti2d}. 2D TTI Model results with graduate change background: (a) target velocity field; (b) velocity field after 10 iteration steps; (c) velocity field after 10 iteration steps subtract the background velocity field $c_{10}-c_0$; (d) decay of the misfit function.}
		\end{figure}
		

	\subsection{Travel-time inversion for a three-layered model}\label{sec:tti3layer}
	
	In this example, we apply the travel-time inversion with random batch gradient reconstruction in a cross-well setup which is often used for high-resolution reservoir characterization in exploration geophysics. Two wells with 24 sources and 48 stations respectively are set on the left and right side of a region of size $3168\xunit~\times~6436\xunit$. The target velocity model is chosen as three-layered in the form of
	\begin{align}\label{eqn:3layer}
		c(x,y,z)=c(x,z)=
		\begin{cases}
			C_1, & \text{ if } z_0 < z < z_1, \\
			C_2\left(1-\alpha\exp\left(-\f\beta{L^2}\left((x-x_c)^2+(z-z_c)^2\right)\right)\right), & \text{ if } z_1 < z < z_2,\\
			C_3, & \text{ if } z > z_2,
		\end{cases}
	\end{align}
	where the background velocities in three layers are $C_1=1800\cunit$, $C_2=2000\cunit$, $C_3=2200\cunit$, and the layer interfaces locate at $z_0=0\xunit$, $z_1=2112\xunit$, $z_2=4224\xunit$. A low-velocity region characterized by a Gaussian perturbation is located at center  of the second layer with $x_c=1584\xunit$, $z_c=3168\xunit$, and we choose $\alpha=10\%$, $\beta=24.2$ and $L=3168\xunit$.
	See Fig.~\ref{fig:tti_3layer}(a) for an illustration of the velocity model and the source-receiver setup. In this numerical example, the beam number $N=32766$,and $\epsilon = L/256 $. 
	Travel-time inversion iteration starts with an piece-wise constant background velocity which is given by \eqref{eqn:3layer} with the same parameter values specified above except that $\alpha=0$.
	We use FGA to simulate the forward and adjoint wave equations with sampling rate $p/N=$20\% to generate random batch for wavefield and kernel reconstruction. In FIG. \ref{fig:tti_3layer}(b), we plot the resulted velocity model $c_9$ after 9 iteration steps, and one can see a good match with the target velocity model. FIG. \ref{fig:tti_3layer}(c) shows the difference in resulted and target velocity models $c_9-c$, and one can see the residual is relatively small comparing to the background and the Gaussian perturbation. 
	Decay of the misfit during iterations is shown in FIG. \ref{fig:tti_3layer_misfit} and one can see that the iteration has convergent numerically.
	\begin{figure}[ht]
		\label{fig:tti_3layer}
		\centering
		\subfigure[]{
			\begin{minipage}[t]{0.25\linewidth}
				\centering
				\includegraphics[width=\linewidth]{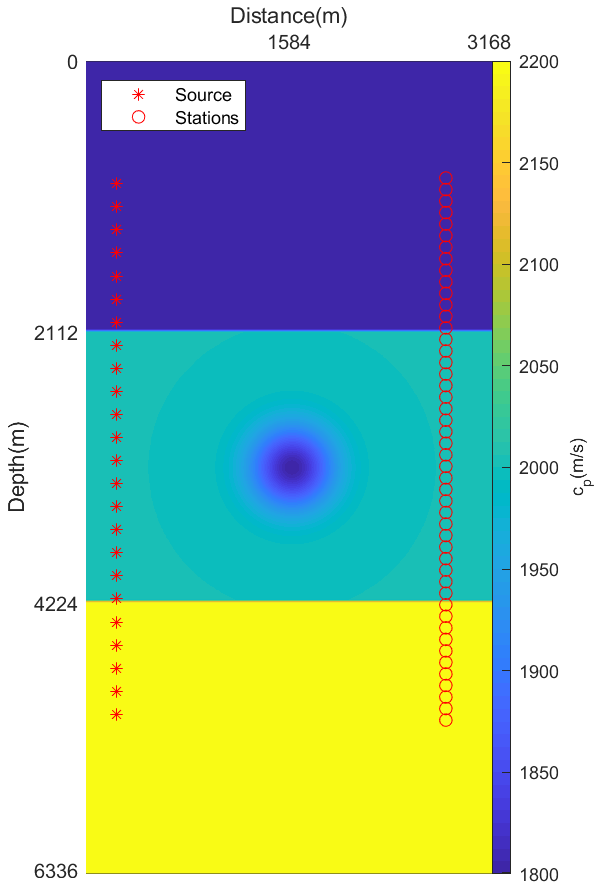}
			\end{minipage}%
		}%
		\subfigure[]{
			\begin{minipage}[t]{0.25\linewidth}
				\centering
				\includegraphics[width=\linewidth]{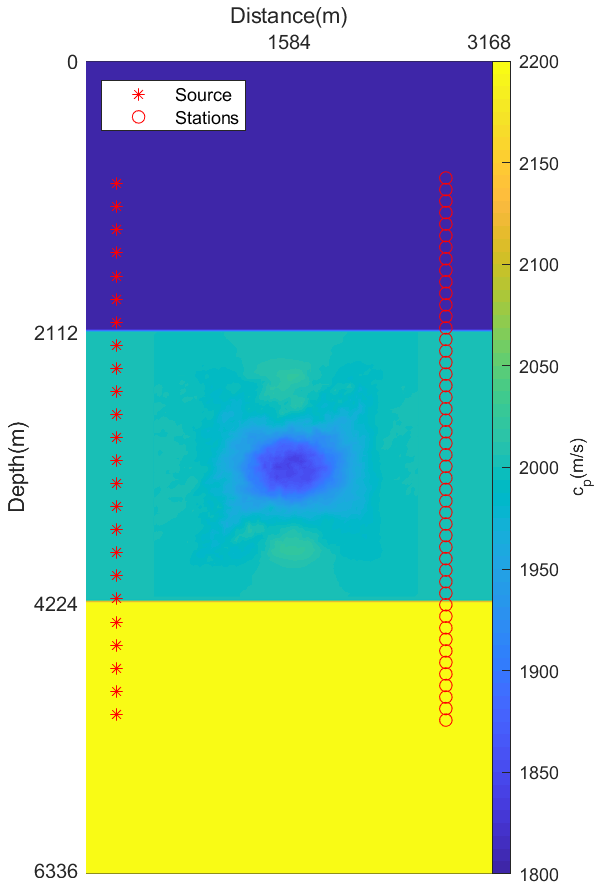}
			\end{minipage}%
		}%
		\subfigure[]{
			\begin{minipage}[t]{0.25\linewidth}
				\centering
				\includegraphics[width=\linewidth]{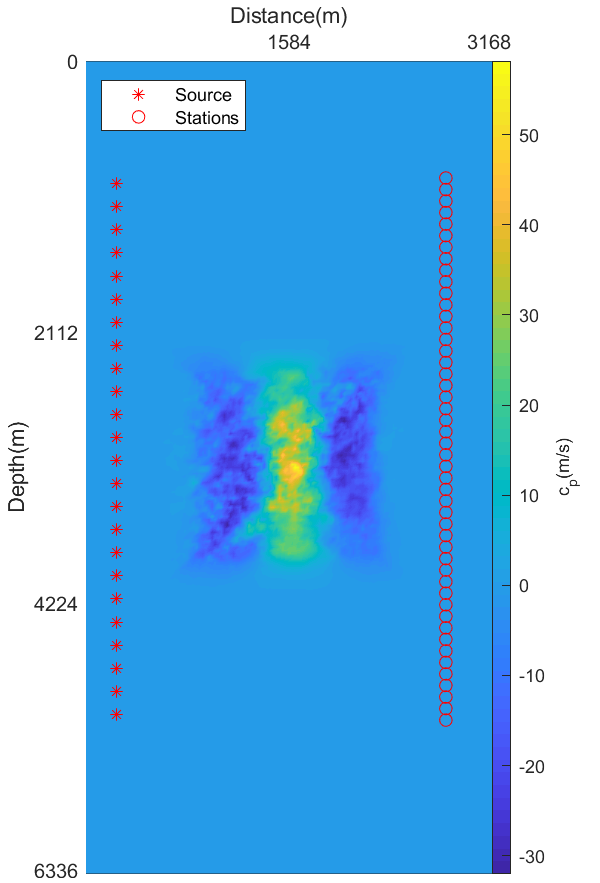}
			\end{minipage}%
		}%
		\caption{Example \ref{sec:tti3layer}. 2D TTI results for a three-layered model: (a)target velocity. (b) velocity after 9 iteration steps. (c) the difference between velocity after 9 iteration steps and target velocity. }
	\end{figure}
	
	\begin{figure}[!ht]
		\label{fig:tti_3layer_misfit}
		\centering
		\includegraphics[width=0.6\linewidth]{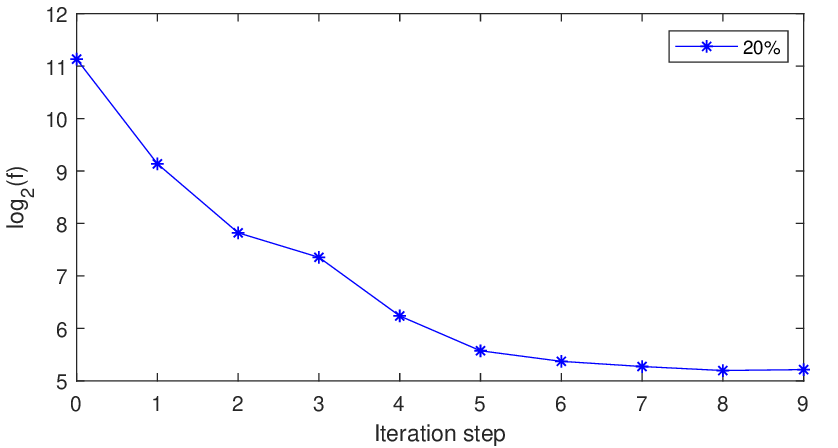}
		\caption{Example \ref{sec:tti3layer}. 2D TTI results for a three-layered model: Decay of the misfit function.}
	\end{figure}

	\subsection{Travel-time inversion for a 3D model}\label{sec:tti3d}
	In this subsection, we present a test using travel-time inversion to image a 3D cube region. The target velocity field is assumed homogeneous in $y$-direction and is set to be a 10\% Gaussian perturbation of a homogeneous background with velocity $2500\cunit$, \ie, 
	\begin{equation}\label{eqn:3dmodel}
		c(x,y,z)=c(x,z) = C_0\left(1-\alpha\exp\left(-\f\beta{L^2}\left((x-x_c)^2+(z-z_c)^2\right)\right)\right),
	\end{equation}
	where $C_0=2500\cunit$, $x_c=z_c=L=1584\xunit$, $\alpha=0.1$, $\beta=24.2$.  Travel-time inversion iteration starts with an velocity field with $2500\cunit$ homogeneously. In this numerical example, the beam number $N=65534$,and $\epsilon = L/64 $. 
	We use FGA to simulate the forward and adjoint wave equations in 3D. To reconstruct the wavefields and kernels, we use strategy 1 with sampling rate $p/N=$5\%, 10\% and 20\% to generate random batch for wavefield and kernel reconstruction.
	

	\begin{figure}[ht]
		\label{fig:3Dtti}
		\centering
		\subfigure[]{
			\begin{minipage}[t]{0.45\linewidth}
				\centering
				\includegraphics[width=\linewidth]{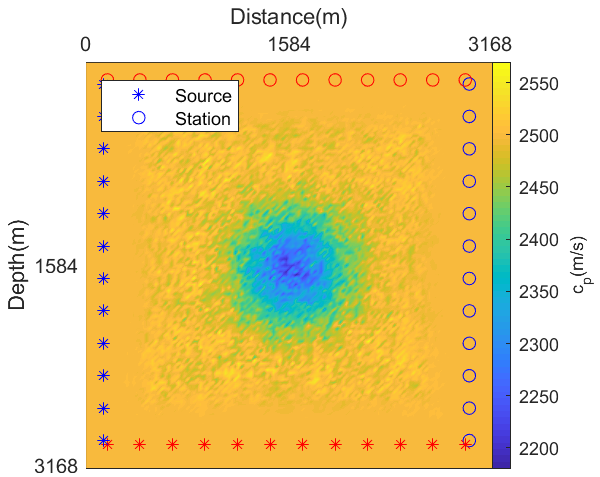}
			\end{minipage}%
		}%
		\subfigure[]{
			\begin{minipage}[t]{0.45\linewidth}
				\centering
				\includegraphics[width=\linewidth]{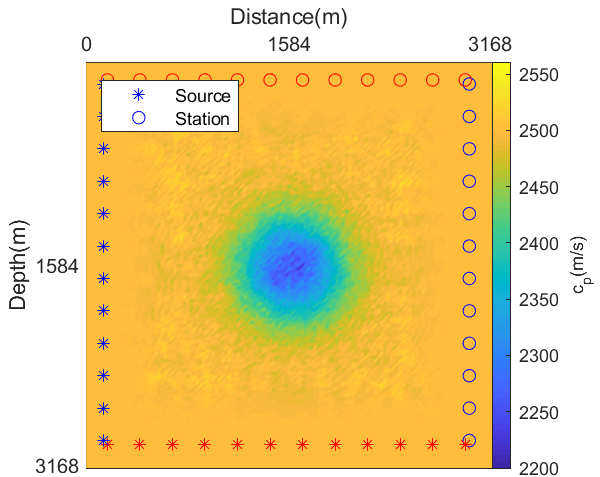}
			\end{minipage}%
		}%
		\quad
		\subfigure[]{
			\begin{minipage}[t]{0.45\linewidth}
				\centering
				\includegraphics[width=\linewidth]{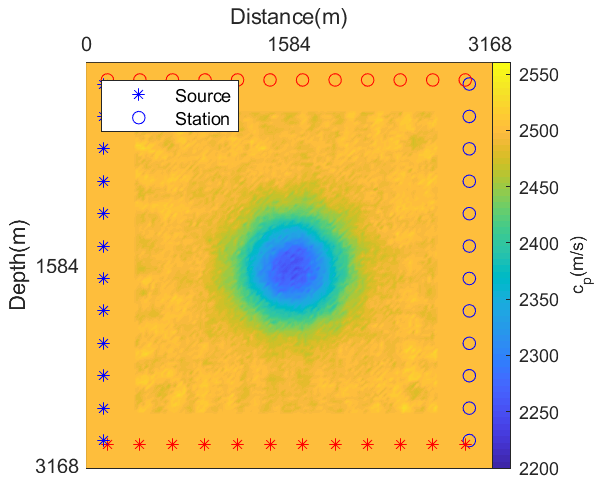}
			\end{minipage}%
		}%
		\subfigure[]{
			\begin{minipage}[t]{0.45\linewidth}
				\centering
				\includegraphics[width=\linewidth]{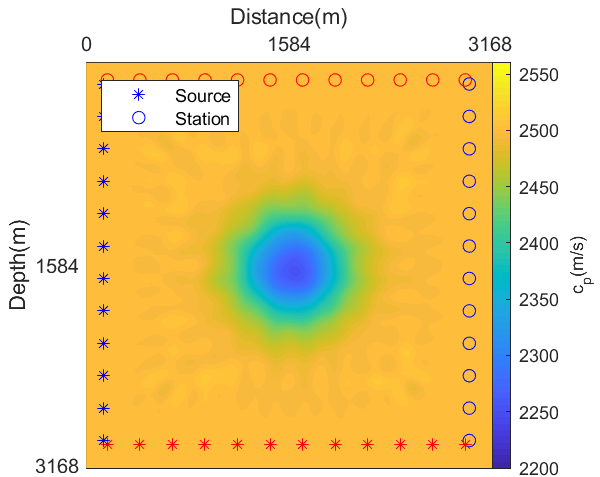}
			\end{minipage}%
		}%
		\caption{Example \ref{sec:tti3d}. 3D TTI results: (a) velocity model after 9 iteration step with sampling rate 5\%; (b) velocity model after 7 iteration step with sampling rate 10\%; (c) velocity model after 7 iteration step with sampling rate 20\%; (d) velocity model after 9 iteration step with sampling rate 100\%.}
	\end{figure}
	\begin{figure}[ht]
		\label{fig:3Dtti_misfit}
		\centering
		\includegraphics[width=0.6\linewidth]{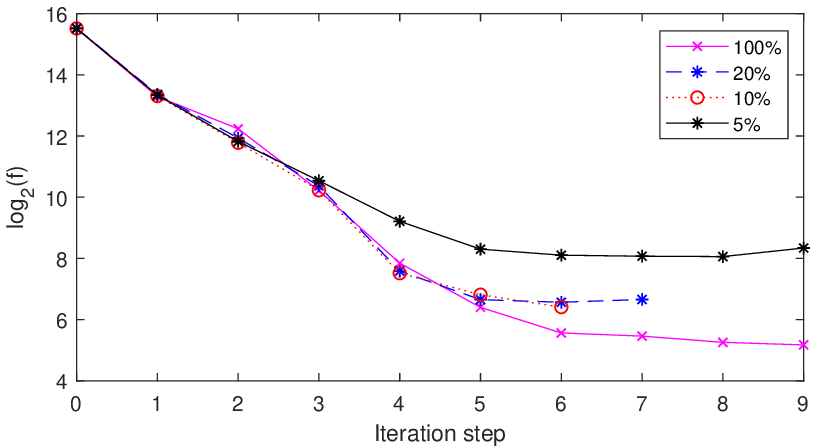}
		\caption{Example \ref{sec:tti3d}. 3D TTI results: Decay of misfit function.}
	\end{figure}
	
	It can be seen from FIG. \ref{fig:3Dtti} that even when 5\% of the beams are used, the tomography can capture at least the location and shape of the low velocity region and give a reasonable model. The larger of the sampling rate, the better resolution of the resulted image. This phenomena can be further seen in FIG. \ref{fig:3Dtti_misfit} as we look at the values of misfit functional. One can observe that smaller misfit value for larger sampling rate, which consists with Theorem \ref{thm:1} since lager $p/N$ implies smaller variance in \eqref{eqn:var} for fixed $N$.  On the other hand, for the first several iteration steps, the value of misfit functions are almost the same for different sampling rate which numerically indicate the convergence rate of the iteration is not sensitive to the batch size.

{
\subsection{Travel-time inversion for a 3D refraction model}\label{sec:exp_gd}
    In this subsection, we give an example using travel-time inversion to image a 3D cuboid region with a completely reflective bottom interface. The target velocity field is assumed homogeneous in $y$-direction and is set to be a 10\% Gaussian perturbation of a homogeneous background with velocity $2500\cunit$, \ie, 
	\begin{equation}\label{eqn:3dmodel_reflect}
		c(x,y,z)=c(x,z) = C_0\left(1-\alpha\exp\left(-\f\beta{L^2}\left((x-x_c)^2+(z-z_c)^2\right)\right)\right),
	\end{equation}
	where $C_0=2500\cunit$, $x_c=L=1584\xunit$, $z_c=891\xunit$, $\alpha=0.1$, $\beta=6.05$.  Travel-time inversion iteration starts with an velocity field with $2500\cunit$ homogeneously.  In this numerical example, the beam number $N=65534$,and $\epsilon = L/64 $. 
	We use FGA to simulate the forward and adjoint wave equations in 3D. To reconstruct the wavefields and kernels, we use strategy 1 with sampling rate $p/N=$5\%, 20\%, and 100\% to generate random batch for wavefield and kernel reconstruction.

	\begin{figure}[ht]
		\label{fig:tti_reflect}
		\centering
		\subfigure[]{
			\begin{minipage}[t]{0.5\linewidth}
				\centering
				\includegraphics[width=\linewidth]{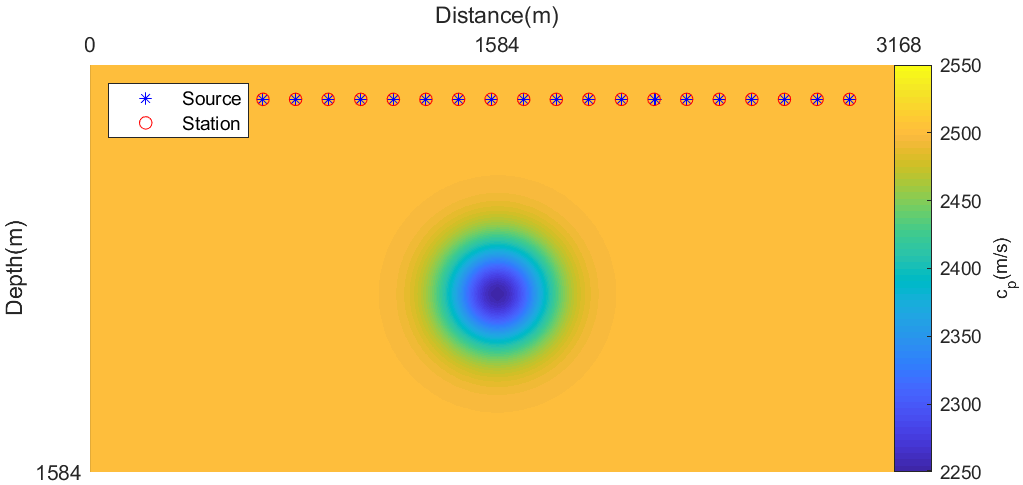}
			\end{minipage}%
		}%
		\subfigure[]{
			\begin{minipage}[t]{0.5\linewidth}
				\centering
				\includegraphics[width=\linewidth]{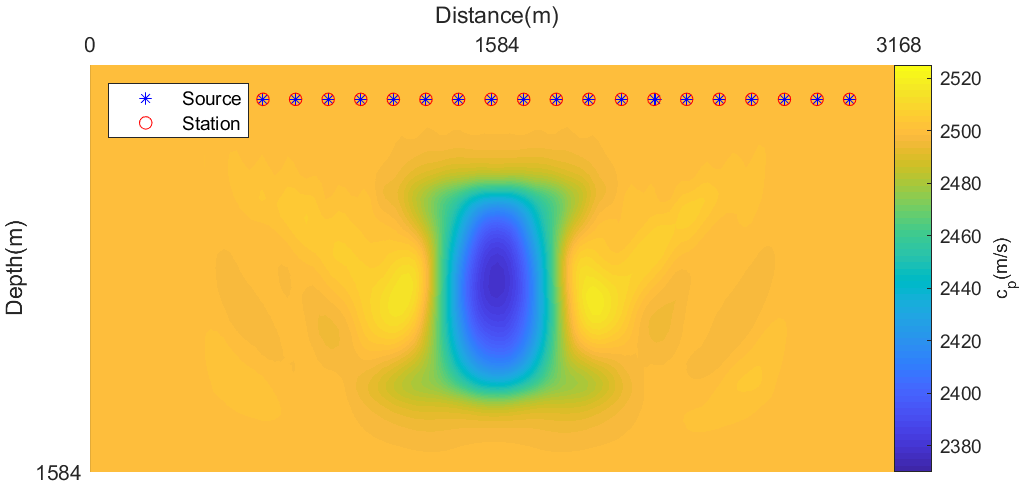}
			\end{minipage}%
		}%
		\quad
		\subfigure[]{
			\begin{minipage}[t]{0.5\linewidth}
				\centering
				\includegraphics[width=\linewidth]{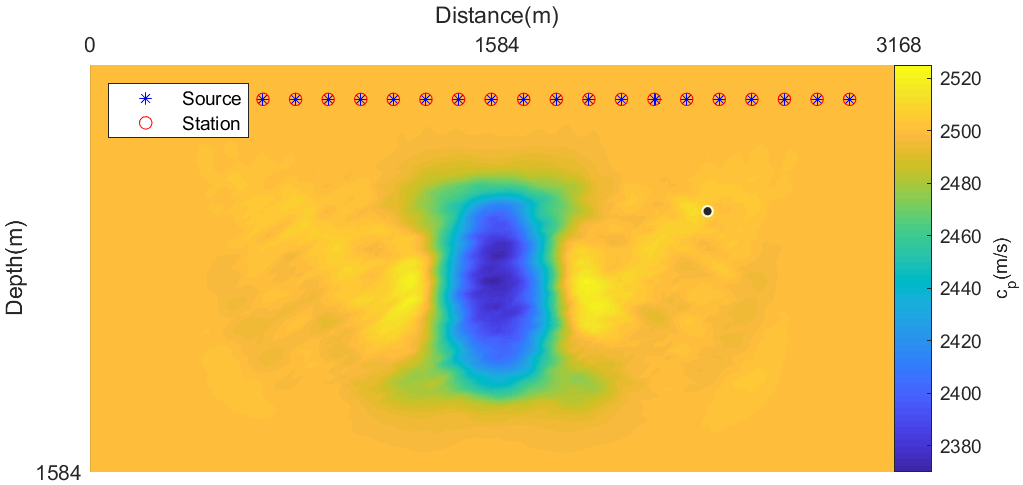}
			\end{minipage}%
		}%
		\subfigure[]{
			\begin{minipage}[t]{0.5\linewidth}
				\centering
				\includegraphics[width=\linewidth]{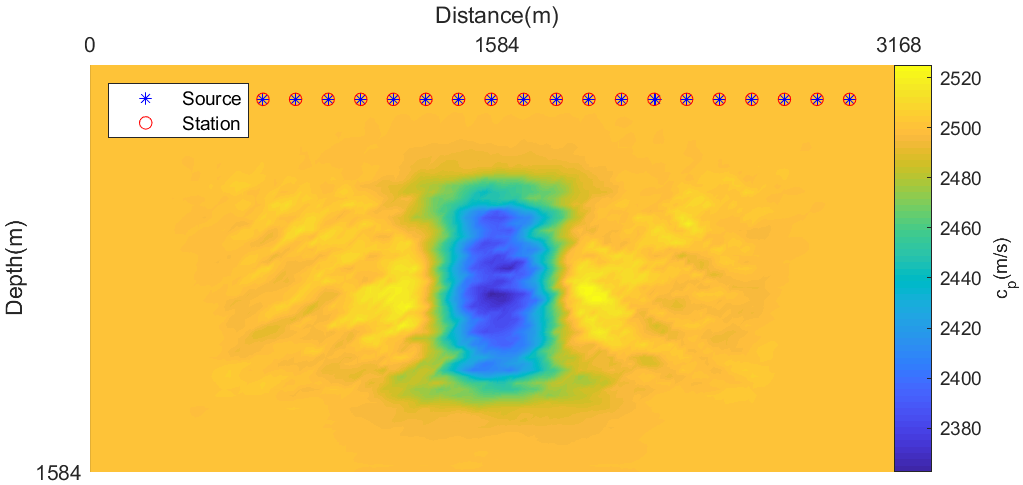}
			\end{minipage}%
		}%
		\caption{Example \ref{sec:exp_gd}. 3D TTI Model results with reflection bottom: (a) target velocity field; (b) velocity field after 14 iteration steps with sampling rate 100\%; (c) velocity field after 11 iteration steps with sampling rate 20\%; (d) velocity field after 15 iteration steps with sampling rate 5\%; }
	\end{figure}
	
	\begin{figure}[ht]
		\label{fig:3Dtti_reflection_misfit}
		\centering
		\includegraphics[width=0.6\linewidth]{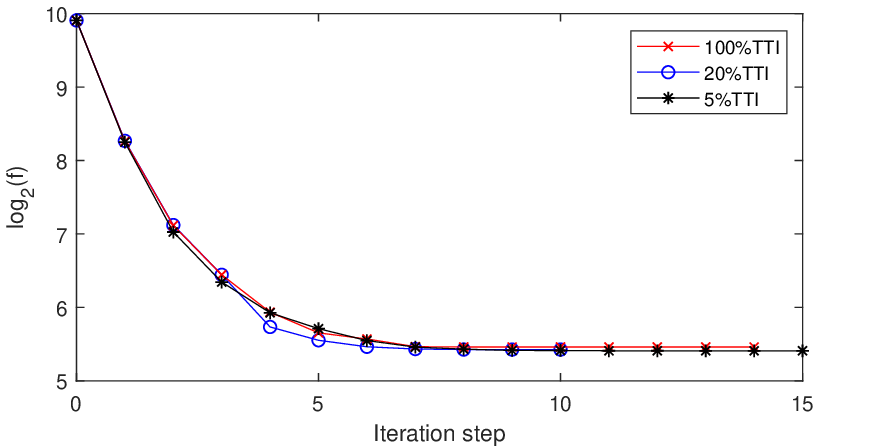}
		\caption{Example \ref{sec:exp_gd}. 3D TTI Model results with reflection bottom: Decay of misfit function.}
	\end{figure}

	It can be seen from FIG. \ref{fig:3Dtti_reflection_misfit} that, in this test, the tomography can capture at least the location and shape of the low velocity region and give a reasonable model. The result with different sampling rates are nearly the same. This phenomena can be further observed in FIG. \ref{fig:3Dtti_reflection_misfit} as we look at the values of misfit function. 
	
	We can also compare the computation time to show the effectiveness of the stochastic method in saving computations.
	As one can see from TABLE \ref{tab}, for different sampling rate from 100\% to 5\%, the computation time spent for 1 iteration step varies from 30.08 hours to 4.18 hours, which indicates that it can save about 86.1\% CPU time by using the random batch method.
	
	\begin{table}
	    \centering
	    \label{tab}
	    Efficiency of random batch method\\
	    \begin{tabular}{|c|c|c|}
	        \hline
	       Sampling Rate  & Computation Time & Saving \\
	       \hline
	       100\%  & 30.08h & 0\% \\
	       \hline
	       20\% & 7.65h & 74.6\% \\
	       \hline
	       10\% & 5.08h & 83.1\%  \\
	       \hline
	       5\% & 4.18h & 86.1\%  \\
	       \hline
	    \end{tabular}
	    \caption{Table of efficiency for different sampling rates.}
	\end{table}
	
}

	\section{Conclusion and discussion}\label{sec:conclusion}
    In this paper, we propose a type of stochastic gradient descent method for seismic tomography. Specifically, we use the frozen Gaussian approximation (FGA) to compute seismic wave propagation, and then construct stochastic gradients by random batch methods. One can easily generalize the idea by replacing FGA with any other efficient wave propagation solvers, {\it e.g.}, Gaussian beam methods. The convergence of the proposed method is proved in the mean-square sense, and we present four examples of both wave-equation-based travel-time inversion and full-waveform inversion to show the numerical performance. This method provides a possibility of efficiently solving high-dimensional optimization problems in seismic tomography. We plan to apply it to image subsurface structures of Earth using realistic seismic signals. 
	
	\section*{Acknowledgements}
	L.C. was partially supported by 
	the National Key R\&D Program of China No. 2021YFA1003001, 2020YFA0712503, 
	the NSFC Projects No. 12271537 and 11901601, 
	and the Fundamental Research Funds for the Central Universities, Sun Yat-sen University No. 2021qntd21. Z.H. was partially supported by the NSFC Projects No. 12025104, 11871298, 81930119. X.Y. was partially supported by the NSF grants DMS-1818592 and DMS-2109116.

	\bibliographystyle{siamplain}
	\bibliography{fga_ref,paper}

\end{document}